\documentclass[a4paper,reqno]{amsart}

\usepackage{graphics,graphicx,color} 
\usepackage{amssymb}
\usepackage{srcltx, bm}




\usepackage{amsfonts}
\usepackage{latexsym}
\usepackage{amsmath,amsthm,amssymb, color, amscd,marginnote}
\usepackage[all]{xy}
 \usepackage{mathrsfs}
\usepackage{MnSymbol}
\usepackage{marginnote}
\usepackage{hyperref}
\hypersetup{colorlinks, linkcolor=blue,citecolor=blue}

\usepackage{graphicx,psfrag,epsfig}

\theoremstyle{plain}
\newtheorem{theorem}{Theorem}

\newtheorem{lemma}[theorem]{Lemma}
\newtheorem{corollary}[theorem]{Corollary}

\newtheorem{proposition}[theorem]{Proposition}
\theoremstyle{definition}
\newtheorem{definition}[theorem]{Definition}
\newtheorem{remark}[theorem]{Remark}
\newtheorem{assumption}[theorem]{Assumption}
\newtheorem{example}[theorem]{Example}

%
%
\def\eps{\varepsilon}

\def\lan{\langle}
\def\ran{\rangle}

\def\bdef{\begin{definition}}
\def\endef{\end{definition}}
\def\bthm{\begin{theorem}}
\def\ethm{\end{theorem}}
\def\blm{\begin{lemma}}
\def\elm{\end{lemma}}
\def\brm{\begin{remark}}
\def\erm{\end{remark}}
\def\bprop{\begin{proposition}}
\def\eprop{\end{proposition}}
\def\bcor{\begin{corollary}}
\def\ecor{\end{corollary}}
\def\be{\begin{eqnarray}}
\def\ee{\end{eqnarray}}
\def\beal{\begin{aligned}}
\def\enal{\end{aligned}}

\def\om{\omega}

\def\e{\varepsilon}

\def\phi{\varphi}

\def\R{\mathbb R}
\def\C{\mathbb C}
\def\Nn{\mathbb N}
\def\T{\mathbb T}

\def\Z{\mathbb Z}

\def\M{\mathcal M}

\def\cP{\mathcal P}

\def\cF{\mathcal F}
\def\~{\tilde}

\def\cA{\mathcal A}

\def\cD{\mathcal D}

\def\PP{\mathbf{P}}
\def\EE{\mathbf{E}}

\def\p{\partial}
 \newcommand{\strela}{\rightharpoonup}
 
  \def\BR{\bar B_R}

\def\be{\begin{equation}}
\def\ee{\end{equation}}
\def\bdef{\begin{definition}}
\def\endef{\end{definition}}
\def\blm{\begin{lemma}}
\def\elm{\end{lemma}}
\def\beal{\begin{aligned}}
\def\enal{\end{aligned}}

\newtheorem*{Pf}{Proof}
\renewenvironment{proof}{\begin{Pf} \begin{upshape}} {\end{upshape} \qed\end{Pf}}

\numberwithin{equation}{section}
\numberwithin{theorem}{section}

\title[On averaging and mixing for SPDEs]
{On averaging and mixing for stochastic PDEs
}
\begin{document}

\author{Guan Huang}
\address{Yau Mathematical Sciences Center, Tsinghua University, Beijing, China}
\email{huangguan@tsinghua.edu.cn}
\author{Sergei Kuksin}
\address{Institut de Math\'emathiques de Jussieu--Paris Rive Gauche, CNRS, Universit\'e Paris Diderot, UMR 7586, Sorbonne Paris Cit\'e, F-75013, Paris, France \& School of Mathematics, Shandong University, Jinan, Shandong,  China }
\email{sergei.kuksin@imj-prg.fr}
\maketitle

\begin{abstract}
We examine the convergence in the Krylov--Bogolyubov averaging for nonlinear stochastic perturbations of linear PDEs with pure imaginary spectrum and 
show that if the involved effective equation is mixing, then the convergence is uniform in time.
\end{abstract}

\section{Introduction} 
The Krylov--Bogolyubov averaging for stochastic PDEs  which we are concerned with in this work, means the following. Starting with a linear PDE on a 
torus (or on a bounded domain) with pure imaginary spectrum we consider its $\e$-small nonlinear stochastic perturbation. Then the above mentioned 
averaging describes the behaviour of the distributions of actions of solutions for the perturbed equation on time-intervals of order $\e^{-1}$. Here the actions 
of solutions are made by  the halves of squared norms of their Fourier coefficients with respect to the basis, made by eigenfunctions of the original linear system.
 Description of the limit is made via an auxiliary {\it effective equation} which is another nonlinear 
 stochastic equation whose 
nonlinearity is made from resonant terms of the nonlinear part of the perturbation. The effective equation may be mixing, and then as time goes to infinity
its solutions converge in distributions to a statistical  equilibrium, given by a measure in a function space. An observation which we make in this work is that 
in the mixing case the convergence in distribution of actions of solutions for the perturbed equation to those of solution for the effective equation, 
described by the Krylov--Bogolyubov averaging, is uniform in time. 

The Krylov--Bogolyubov  
 averaging under discussion applies to various classes of stochastic  PDEs, depending on the type of the original unperturbed  linear system. In 
Sections~\ref{s_2}-\ref{s_3} we discuss in details the averaging for stochastic complex Ginzburg--Landau (CGL) equations, regarded as perturbations of 
linear Schr\"odinger equations, and in Section~\ref{s_nlw} briefly repeat the argument for stochastic nonlinear wave equations.

\medskip
\noindent {\it Notation}. For a Banach space $B$ and $R>0$ we denote 
$
\bar B_R(B) = \{ b\in B: | b|_B \le R\};
$
for a metric space $M$, $\cP(M)$ stands for the space of probability Borel measures on~$M$. By $\strela$ we denote the weak convergence of measures and by $\cD(\xi)$ we denote the distribution of a random variable $\xi$. For a function $f$ and a measure $\mu$ we denote 
$
\lan f, \mu\ran= \int\! f\,d\mu.
$

\section{CGL: the setting and  result}\label{s_2}
We  consider a stochastic  CGL equation on a torus $T^{ D}:=\mathbb{R}/(L_1\mathbb{Z})\times\mathbb{R}/(L_2\mathbb{Z})\times\cdots\times\mathbb{R}/(L_{ D}\mathbb{Z})$, $L_1,\dots, L_{ D}>0$,
\begin{equation}\label{cgl-1}
u_t+i(-\Delta+V(x))u=\varepsilon\mu\Delta u+\varepsilon\mathscr{P}(\nabla u,u)+\sqrt{\varepsilon}\eta(t,x), \; \;\;u=u(t,x), \; x\in T^{ D} ,
\end{equation}
where $\mu \in \{0,1\}$,  $\mathscr{P}:\mathbb{C}^{{ D}+1}\to\mathbb{C}$ is a $C^\infty$-smooth function,  
$\varepsilon\in(0,1]$ is a small parameter, 
the random force $\eta(t,x)$ is white in time and regular in $x$, and the potential $V(x)$  is a real smooth function.
 If $\mu=0$, the nonlinearity $\mathscr{P}(\nabla u,u)$ should not  depend on $\nabla u$.  
For simplicity we assume that $\mu=1$ (the case $\mu=0$ can be treated similarly). Again only to simplify  presentation we also assume that 
$V(x)>0$ for all  $x$.

For any $s\in\mathbb{R}$ we denote by $H^s$ the Sobolev space of complex functions on $T^D$,  provided with the norm $\|\cdot\|_s$, 
$$\|u\|_s^2=\langle(-\Delta)^su,u\rangle+\langle u,u\rangle,\; \text{if} \; s\geqslant0,$$
where $\langle\cdot,\cdot\rangle$ is the real scalar product in $L^2(T^D;\C)$,
\[ 
\langle u,v\rangle=\Re\int_{T^d}u\bar v\, {\rm{d}}x,\quad  u,v\in L^2(T^D;\C).
\]
  Let $\{\mathbf{e}_{l}(x),l\in\mathbb{N}\}$ be the usual trigonometric basis of the space $L^2(T^D)$, parametrized by natural numbers. 
Then $-\Delta \mathbf{e}_l = \kappa_l \mathbf{e}_l$,  $\kappa_l\ge0$, and we assume that $0=\kappa_1<\kappa_2\le\kappa_3,\dots$. 
  We take the force term $\eta(t,x)$  in \eqref{cgl-1}   to be  of the form
  \be\label{eta}
  \eta(t,x)=\frac{\partial }{\partial t}\xi(t,x),\quad \xi(t,x):=\sum_{l\geqslant1}b_l\beta_l(t)\mathbf{e}_l(x).
  \ee
  Here $\beta_l(t)=\beta_l^R(t)+i\beta_l^I(t)$, where $\beta_l^R(t)$, $\beta_l^I(t)$, $l\geqslant1$, are independent real-valued standard Brownian motions, defined on a complete probability space $(\Omega, \mathcal{F},\mathbf{P})$ with a filtration $\{\mathcal{F}_t;t\geqslant0\}$.\footnote{So $\{\beta_l(t)\}$
  are standard independent
 complex Brownian motions.  }
  As a function of $x$,  $\xi(t,x)$ is assumed to be smooth in the sense that the real numbers $b_l, l\ge1$, decays to zero faster than any 
  negative degree of $l$.
  
  Introducing the slow time $\tau=\varepsilon t$, we rewrite  eq. \eqref{cgl-1} as
  \begin{equation}\label{m-equation1}
  \dot{u}+\varepsilon^{-1}(-\Delta+V(x))u= \Delta u+\mathscr{P}(\nabla u,u)+\dot\xi(\tau,x),\quad u=u(\tau,x), \;x\in T^D,
  \end{equation}
  where   the upper dot  stands for ${\p}/{\p\tau}$, and $\xi(\tau,x)$ is as in \eqref{eta} with $t:=\tau$ and with another set of standard independent
 complex Brownian motions $\beta_l$. 
  Here and below we write stochastic PDEs with additive noise as 
  nonlinear PDEs with forcing terms of the form \eqref{eta}. 
  \begin{definition}
  
 If $L$ and $E$ are Banach spaces with norms $|\cdot|_L$ and $|\cdot|_E$,
 then $\text{Lip}_m(L, E)$, $m\ge0$, is the collection of maps $F:L \to E$ such
 that for any $R\geqslant1$,
\[
\sup_{R>0} \Big( (1+ |R|)^{-m}\big(\text{Lip}(F|_{\BR(L)})+\sup_{v\in \BR(L)}|F(v)|_E\big)\Big)  <\infty,
\]
where 
 $\text{Lip}(f)$ is the Lipschitz constant of a mapping~$f$.  \end{definition}
 
 We 
  make the following assumption concerning  the well-posedness of eq. \eqref{m-equation1}. There and everywhere below in our paper
\[
\begin{split}
\text{\it either always the indices $s$ of involved Sobolev spaces $H^s$ are integer,}\\
\text{\it  or always they are any real numbers.} 
\end{split}
\]

  \begin{assumption}\label{assume1} There exist numbers  $0<s_1<s_2<+\infty$ and $\bar m\in\mathbb{N}$ such that for  each $s\in (s_1,s_2)$,
  \begin{enumerate}      \item  the mapping $H^s\to H^{s-1}$: $u\mapsto \mathscr{P}(\nabla u,u)$ belongs to $\text{Lip}_{\bar m}(H^{s},H^{s-1})$;
  \item for any $ \e\in (0,1]$ and 
   $u_0\in H^s$ equation  \eqref{m-equation1} has a unique strong solution $u^\om(\tau;u_0)$, equal $u_0$ at $ \tau=0$, 
  defined for $\tau\geqslant0$, and 
 $$ 
  \mathbf{E}\sup_{\theta\leqslant\tau\leqslant\theta+1}\|u(\tau;u_0)\|_s^{ 2\bar m}\leqslant C_s(\|u_0\|_s),\quad  \forall \theta\geqslant0,
$$ 
  where $C_s(\cdot)$ is a continuous and non-decreasing function. 
   \item
   If we work in the category of Sobolev spaces with integer indices, then the integer segment $(s_1, s_2)\cap \Z$ contains at least two points. 
   \end{enumerate}
  \end{assumption}
  
   Under the above assumptions  the  family of solutions $\{u^\omega(\tau; u_0), u_0\in H^s\}$ defines in the spaces $H^s$, $s\in(s_1, s_2)$, 
     Markov processes.

  Assumption \ref{assume1} is satisfied for many nonlinearities $\mathscr P$. In particular it holds if 
  \be\label{nonlin}
  \mathscr P(\nabla u, u) = -u + {\frak z} f_p(|u|^2)u, \quad {\frak z} \in \C, \ |{\frak z}|=1, \Im {\frak z} \le 0,\ \Re{\frak z} \le0 ,
  \ee
  where $f_p(r)$ is a non-decreasing smooth function on $\R$, equal $r^p$ for $r\ge1$. The degree $p\ge0$ is any if $D=1,2$, and 
  $p< 2/(D-2)$ if $D\ge3$. See \cite[Section~5]{HKM}.

 We denote by $A_V$ 
 the Schr\"odinger operator  
 $$A_Vu:=-\Delta u+V(x)u.$$
 Let $\{\lambda_l\}_{l\geqslant1}$ be its eigenvalues, ordered in such a way that 
 \[
 0<\lambda_1\leqslant\lambda_2\leqslant\lambda_3\leqslant\cdots \,
 \]
 (we recall that $V>0$)  and let 
$\{\varphi_l,\;l\geqslant1\}\subset L^2({T^D})$ be an orthonormal basis, 
formed by the corresponding 
 eigenfunctions. We denote $\Lambda=(\lambda_1,\lambda_2,\dots)$ and call 
 $\Lambda$ the {\it frequency vector} of eq.~\eqref{m-equation1}.
  For a complex-valued function $u\in H^s$ we denote by 
 $$
  \Psi(u):=v=(v_1,v_2,\dots),\quad v_k\in\mathbb{C},
  $$ 
  the vector of its Fourier coefficients with respect to the basis $\{\varphi_l\}_{l\geqslant1}$: $u=\sum_{l\geqslant1}v_l\varphi_l$.
  Note that $\Psi$ is a real operator: it maps real functions $u(\cdot)$ to real vectors $v$. 
  In the space of complex sequences $v=(v_1,v_2,\dots)$ we introduce the norms 
  \[\left|v\right|^2_s=\sum_{k=1}^{\infty}\left(|\lambda_k|^s+1\right)|v_k|^2,\quad
  s\in\mathbb{R}\,,
  \] 
  and denote $h^s=\{v:|v|_s<\infty\}$. Then  $\Psi$ defines   an     isomorphism 
   between the spaces $H^s$ and $h^s$, for any $s$. 
  
  Now we write Eq.\ (\ref{m-equation1}) in the $v$-variables:
  \begin{equation}\label{eq-v1}
  \dot{v}_k+\varepsilon^{-1}i\lambda_kv_k=- \lambda_kv_k+P_k(v)+\sum_{l=1}^\infty\Psi_{kl}b_l\dot\beta_l,\quad k\in\mathbb{N}.\end{equation}
  Here the $k$-th equation is obtained as the $L^2$-scalar product with $\phi_k$ of eq.~\eqref{m-equation1}, 
  where $u= \Psi^{-1}v$. So 
   $$
   P(v)=(P_k(v),\;k\in\mathbb{N})=\Psi\Big(  V(x)u+\mathscr{P}(\nabla u, u)\Big),\quad u=\Psi^{-1}v,
   $$
  and    $\Psi_{kl} =\langle \phi_k, \mathbf{e}_l \rangle$ (thus $\big( \Psi_{kl}  \big)$ is the matrix of the operator $\Psi$ 
  with respect to the trigonometric basis in $L^2(T^D)$ and the natural basis in $h^0$).

  Our  task is to study the dynamics of eq. \eqref{eq-v1} when $\varepsilon\ll 1$. 
  An efficient way to deal with this problem is through the  {\it interaction representation}, which means transition from  variables $\{ v_k(\tau)\}$ 
  to variables  $\{ a_k(\tau)\}$, where 
$$
a_k(\tau)=e^{i\varepsilon^{-1}\lambda_k\tau}v_k(\tau), \quad k\ge1.
$$
In the $a$-variables equations \eqref{eq-v1} read 
  \begin{equation}\label{eq-a1}
  \dot{a}_k(\tau)=- \lambda_ka_k+e^{i\varepsilon^{-1}\lambda_k\tau}P_k(\Phi_{-\varepsilon^{-1}\Lambda\tau}a)+
  e^{i\varepsilon^{-1}\lambda_k\tau}\sum_{l=1}^\infty\Psi_{kl}b_l\dot\beta_l,\quad k\in\mathbb{N}\,,\;\; \tau\ge0, 
  \end{equation}
  where for a vector  $\theta=(\theta_k,\;k\in\mathbb{N})\in\mathbb{R}^{\infty}$, $\Phi_{\theta}$ stands for the rotation 
  in $h^s$,  defined by 
 \begin{equation}\label{Phi-d}
  \Phi_{\theta}v=v',\quad v'_k=e^{i\theta_k}v_k\quad\forall\,k\,.
\end{equation}
  Clearly operators $\Phi_\theta$ define isometries  of all   spaces $h^s$.  By \mbox{$a^\e(\tau; v_0) = \big( a_k^\e(\tau; v_0) , k\ge1)$} 
  we denote a solution $u(\tau;u_0)$, written in the $a$-variables. It solves system \eqref{eq-a1} with the initial data
  $$
  a(0) =v_0 := \Psi(u_0),
  $$ 
  and in view of Assumption \ref{assume1}.(2), 
  \be\label{assume-bound11}
  \mathbf{E}\sup_{\theta\leqslant\tau\leqslant\theta+1}|a^\e(\tau;v_0)|_s^{2\bar m}
  \leqslant C'_s(|v_0 |_s),\quad  \forall \theta\geqslant0.
  \ee

  In order to describe  the dynamics of eq. \eqref{eq-a1} with $\varepsilon\ll1$ 
  we introduce an {\it  effective equation}: 
  \begin{equation}\label{eq-ef1}
  \dot{ a }_k=- \lambda_k a _k+R_k( a )+\sum_{l=1}^\infty B_{kl}\dot\beta_l,\quad k\in\mathbb{N},\;\;\; \tau\ge0.
  \end{equation}
  Here $\{B_{kl}, k,l\geqslant1\}$ is the principal square root of  infinite   matrix $\{A_{kl},\; k,l\geqslant1\}$,\,\footnote{
  The matrix $(A_{kl})$ defines a non-negative
   compact   self-adjoint operator in the space $l^2$. So its principal square root (which defines  another  non-negative
   compact   self-adjoint operator) exists.
  }
  $$A_{kl}=\begin{cases}\sum_{j\geqslant1}b_j^2\Psi_{kj}\Psi_{lj},&\text{if }\lambda_k=\lambda_l,\\
  0,&\text{else},\end{cases}$$
  and
  \begin{equation}\label{limit-eq}
  R( a ):=(R_k( a ),\;k\in\mathbb{N})=\lim_{T\to\infty}\frac{1}{T}\int_0^{T}\Phi_{\Lambda t}P(\Phi_{-\Lambda t} a )dt
   \end{equation}
  (see  \cite[Lemma 4]{HKM} concerning this limit). 
  Under (1) of Assumption \ref{assume1}, for $s\in(s_1,s_2)$, the mapping $h^s\to h^{s-1}: v\mapsto R(v)$ belongs to $\text{Lip}_{\bar m}(h^s, h^{s-1})$, see  \cite{HKM}. Therefore for any $a_0\in H^s$ with  $s\in (s_1,s_2)$, a strong solution $a^0(\tau; a_0)$ of eq. \eqref{eq-ef1}, equal $a_0$ at $\tau=0$, is unique and exists at least locally. 
  In  \cite[Theorem~2 and Proposition~1]{HKM} we proved the following result:

  \begin{theorem}\label{t_0} 
  If Assumption \ref{assume1} holds, then for any $s_1< s_* <\bar s <s_2$ and any
  $v_0\in h^{\bar s}$ we have: 
  
  i) eq. \eqref{eq-ef1} has a unique strong solution $a^0(\tau; v_0)$, $\tau \ge0$, equal to $v_0$ at $\tau=0$. It belongs to
  $
  C([0, \infty), h^{\bar s})
  $
  a.s., and for any $\theta\ge0$, 
  \begin{equation}\label{up-1}
 \mathbf{E}\sup_{\tau\in[\theta,\theta+1]}|a^0(\tau;v_0)|_{\bar s}^{2\bar m}\leqslant C'_{\bar s}(|v_0|_{\bar s}). 
\end{equation}

ii) For any $T>0$,
  \be\label{conv}
  \cD \big(a^\e (\tau; v_0)\! \mid_{[0,T]} \!\big)  \strela \cD \big(a^0 (\tau; v_0)\! \mid_{[0,T]} \!\big) \quad \text{in}\;\; \cP\big(C([0,T]); h^{s_*}) \big)\;\;
  \text{as} \; \;\e\to0. 
  \ee
  \end{theorem}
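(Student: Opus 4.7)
My approach breaks along the two parts of the statement. For part~(i), Assumption~\ref{assume1} gives that the drift $R:h^{\bar s}\to h^{\bar s-1}$ of the effective equation lies in $\text{Lip}_{\bar m}$, while the diagonal linear part $\{-\lambda_k a_k\}$ generates an analytic semigroup on each $h^{\bar s}$ (since $\lambda_k>0$ grow to infinity, multiplication by $e^{-\lambda_k\tau}$ smooths by any number of derivatives at cost a power of $\tau^{-1}$). Standard mild-solution theory (contraction in $C([0,\tau_*]; h^{\bar s})$, using this parabolic gain to absorb the one-derivative loss in $R$) gives local-in-time strong existence and uniqueness. To extend globally and obtain \eqref{up-1}, I would apply It\^o's formula to $|a|_{\bar s}^{2\bar m}$: the linear part produces a non-positive quadratic form, the $R$-term is absorbed by the $\text{Lip}_{\bar m}$ estimate together with interpolation between $h^{\bar s-1}$ and $h^{\bar s}$, and the stochastic integral is controlled via BDG. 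A Gronwall and maximal-inequality argument then yields \eqref{up-1}.

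For part~(ii) I would use the classical Khasminskii-type averaging scheme. The a priori bound \eqref{assume-bound11} together with an $L^{2\bar m}$-estimate on the time increments of $a^\e$ (read off from \eqref{eq-a1} via BDG) makes the family $\{\cD(a^\e(\cdot;v_0)|_{[0,T]})\}_{\e\in(0,1]}$ tight in $\cP(C([0,T]; h^{s_*}))$, thanks to the compact embedding $h^{\bar s}\hookrightarrow h^{s_*}$ and Arzel\`a--Ascoli. Let $Q^*$ be a weak accumulation point along some sequence $\e_n\to 0$, and use Skorokhod's theorem to realise $a^{\e_n}\to a^*$ almost surely in $C([0,T]; h^{s_*})$ on a new probability space, with $\cD(a^*)=Q^*$. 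By the uniqueness in (i), it then suffices to show that $a^*$ is a strong solution of \eqref{eq-ef1} driven by some standard complex Brownian motions on the new space, since this will force $Q^*=\cD(a^0)$ and, the limit being independent of the subsequence, deliver the convergence \eqref{conv}.

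The heart of the proof is passing to the limit inside the integrated drift
\[
\int_0^\tau e^{i\e_n^{-1}\lambda_k s}\,P_k\big(\Phi_{-\e_n^{-1}\Lambda s}\,a^{\e_n}(s)\big)\,ds
\]
and inside the quadratic variation of the stochastic integral in \eqref{eq-a1}. For the drift I would partition $[0,\tau]$ into subintervals of length $\delta$, replace $a^{\e_n}$ by its left endpoint on each subinterval using the equicontinuity estimate, and invoke the defining limit \eqref{limit-eq} of $R$ with averaging length $\delta/\e_n$, letting $\delta\to 0$ slowly after $\e_n\to 0$; the $\text{Lip}_{\bar m}$ continuity of $R$ on $h^{s_*}$ (which follows from the Lip estimate of $\mathscr{P}$ combined with an interpolation in $s\in(s_1,s_2)$) controls the remaining error. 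For the noise term, the rotated increments $e^{i\e_n^{-1}\lambda_k s}\,d\beta_l$ still have the law of standard complex-Brownian increments, and the off-diagonal quadratic covariations carry oscillating factors $e^{i\e_n^{-1}(\lambda_k-\lambda_l)s}$ that integrate to zero unless $\lambda_k=\lambda_l$, leaving exactly the matrix $(A_{kl})$ in the limit; a martingale-representation argument on a possibly enlarged probability space then yields the driving Brownian motions of \eqref{eq-ef1}. The main obstacle is the drift-averaging step, where one must simultaneously balance $\e_n$, the partition scale $\delta$ and the modulus of continuity of $a^{\e_n}$ in $h^{s_*}$; the strict gain of regularity $\bar s>s_*$ is essential so that the non-resonant oscillations genuinely vanish and the error in the freezing approximation can be closed.
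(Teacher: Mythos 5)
Your part~(ii) is essentially the Khasminskii averaging scheme (tightness from \eqref{assume-bound11} plus increment estimates, Skorokhod realisation, identification of the limit via the defining limit \eqref{limit-eq} on a mesoscopic partition, and the resonance computation for the quadratic variation), and this matches the route the paper takes through \cite[Theorem~2, Proposition~1]{HKM}. The problem is your part~(i), and specifically the globalisation step. Assumption~\ref{assume1} gives you only that $R\in\text{Lip}_{\bar m}(h^{\bar s},h^{\bar s-1})$, i.e.\ a locally Lipschitz drift with polynomial growth of degree $\bar m+1$, and \emph{no} sign, monotonicity or dissipativity condition on $\mathscr P$ or $R$ is assumed (the a priori bounds in Assumption~\ref{assume1}.(2) are postulated for the $\e$-equation, not derived from structure). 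If you run It\^o's formula on $|a|_{\bar s}^{2\bar m}$ as you propose, the drift term is controlled only by
\[
\big|\langle a, R(a)\rangle_{\bar s}\big| \le |a|_{\bar s+1}\,|R(a)|_{\bar s-1}
\le \tfrac12 |a|_{\bar s+1}^2 + C\big(1+|a|_{\bar s}\big)^{2\bar m+2},
\]
so after absorbing $|a|_{\bar s+1}^2$ into the parabolic dissipation you are left with a differential inequality of the form $\dot y \le C(1+y^{1+\kappa})$, $\kappa>0$, for $y=\EE|a|_{\bar s}^{2\bar m}$. Gronwall gives only local-in-time control and cannot exclude blow-up, let alone produce the $\theta$-uniform bound \eqref{up-1}. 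So the step ``the $R$-term is absorbed by the $\text{Lip}_{\bar m}$ estimate together with interpolation'' fails as stated.

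The paper closes this exact gap the other way around (Remark~\ref{r_2.4}.1): it first has convergence \eqref{conv} from \cite{HKM}, which exhibits $a^0$ as a weak limit of $a^\e$ in $C([0,T];h^{s_*})$, and then \emph{transfers} the assumed bound \eqref{assume-bound11} to the limit. Concretely, for the truncated functionals $\sup_{\tau\in[\theta,\theta+1]} N\wedge|\Pi_M a(\tau)|_{\bar s}^{2\bar m}$ (bounded and continuous on $C([\theta,\theta+1];h^{s_*})$ because $\Pi_M$ is a finite-dimensional projection) the weak convergence gives the uniform bound $C'_{\bar s}(|v_0|_{\bar s})$, and letting $N\to\infty$, then $M\to\infty$ with Fatou's lemma yields \eqref{up-1}; the bound then upgrades the $h^{s_*}$-solution to an $h^{\bar s}$-solution, and uniqueness follows from local Lipschitz continuity of $R$. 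Your local existence via the analytic semigroup and the mild formulation is fine (the paper uses the same Duhamel smoothing in the proof of Lemma~\ref{p_suff_cond}), but to repair your argument you must either add a dissipativity hypothesis you are not entitled to, or adopt the paper's order of logic: prove (ii) first and let the effective equation inherit its global bounds from the perturbed one.
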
   
  
    \begin{remark}\label{r_2.4}
       1) In \cite{HKM} it was proved only the second assertion of the theorem, implying that 
     $a^0$ is a solution of eq.~\eqref{eq-ef1}   in $h^{s_*}$. Then
      \eqref{assume-bound11} and   Fatou's  lemma imply \eqref{up-1}. Indeed, for $M\in \Nn$ let $\Pi_M(a_1, a_2, \dots) = 
    (a_1, \dots, a_M, 0, \dots)$. Then by \eqref{assume-bound11}     and convergence  \eqref{conv}, for each $\theta\ge0$ and any
    $M,N \in \Nn$
    $$
    \EE \sup_{\tau\in[\theta,\theta+1]}N\wedge |\Pi_M a^0(\tau;v_0)|_{\bar s}^{2\bar m}\leqslant C'_{\bar s}(|v_0|_{\bar s}). 
    $$
    Sending first $N\to\infty$ and then $M\to\infty$ and using Fatou's lemma we recover \eqref{up-1}.
     Since $a^0$ is a solution in $h^{s_*}$, then dues to  \eqref{up-1} it is a solution in $h^{\bar s}$. Uniqueness of a solution is obvious. 
    
    2) It follows immediately that  \eqref{conv} also holds for a solution $a^\e(\tau; v_0)$ if the initial data      $v_0$ is a r.v. in $h^{\bar s}$, independent from the 
    random field $\xi$. Moreover, a simple analysis of the proof in \cite{HKM} implies that if $|v_0^\om|_{\bar s} \le M$ a.s., then the rate of  convergence 
     \eqref{conv}     depends only on $M$ (and, of course, on      $s_*<\bar s$).

     3) Theorem's assertion with the same proof remains true if in the r.h.s. of eq.~\eqref{m-equation1} we replace the viscosity $\Delta u$ by the 
     hyperviscosity $-(-\Delta+1)^r u $, $r\in \Nn$ 
      (provided that Assumption \ref{assume1} holds for the equation). The equations with    hyperviscosity are important for some applications, see below Example~\ref{e_3}.

      4)  In \cite{HKM} the bounds  $ \mathbf{E}\sup_{\theta\leqslant\tau\leqslant\theta+1}\|u(\tau;u_0)\|_s^{m}$ are assumed for all  $m$-th moments of solutions,        $m\in\mathbb{N}$. 
      It was done only for simplicity, and 
    under  the additional restriction in Assumption \ref{assume1}.(1), which specifies the growth of the the nonlinearity $\mathscr{P}$, only the bounds for the 
      moments with $m\le 2\bar m$ are needed for the proof. 
      This difference is rather insignificant since  using the standard techniques of exponential supermartingales  it is usually 
      easy to obtain bounds for all  $m$-th moments after the  second moments are estimated. 
  \end{remark}

  Again, solutions  $a^0(\tau; v_0)$ of \eqref{eq-ef1} define in the space    $h^{s_*}$ a  Markov process. 
  
  Our goal in this work is to prove that if effective 
  equation \eqref{eq-ef1} is mixing, then convergence \eqref{conv} is such that
   $ \cD (a^\e (\tau; v_0) )\strela  \cD (a^0 (\tau; v_0))$ uniformly in $\tau\ge0$, with respect to the dual-Lipschitz distance. 
   We recall 
   
  \begin{definition}\label{def4}
   Let $M$ be a complete and separable metric space. For any two measures $\mu_1$, $\mu_2\in\mathcal{P}(M)$ 
   the dual-Lipschitz distance between them is 
  $$
  \|\mu_1-\mu_2\|_{L,M}^*:=\sup_{f\in C(M), \, |f|_{L,M}\leqslant1}\big|\langle f,\mu_1\rangle-\langle f,\mu_2\rangle\big|\leqslant2,
  $$
  where $|f|_{L,M}=\text{Lip} f+\|f\|_{C(M)}$. 
  \end{definition}
  
  For future usage we note that for $s \ge s'$  space $\cP(h^{s})$ is naturally embedded in $\cP(h^{s'})$, and it easily follows from the definition that 
  for $\mu_1, \mu_2 \in \cP(h^{s})$ the distance $\|\mu_1-\mu_2\|_{L, h^{s'}}^*$ is a non-decreasing function of $s'\le s$. 
  \smallskip
  
  To proceed we have to 
  assume that the effective equation is mixing, and the  rate of mixing  is uniform for initial data from bounded sets: 
  
  \begin{assumption}\label{assume2}   For some $s_*\in(s_1,s_2)$  effective equation \eqref{eq-ef1} is mixing in the space $h^{s_*}$ with a stationary measure
   $\mu^0\in \mathcal{P}(h^{s_*})$,  and for each $M>0$ and $v\in \bar{B}_M(h^{s_*})$, we have 
  \begin{equation}\label{mix-b1}
  \|\cD(a^0(\tau;v))-\mu^0\|_{L,h^{s_*}}^*\leqslant  \frak g_M(\tau),
  \end{equation}
  where $\frak g$ is a continuous function of $(M,\tau)$ which goes to zero when $\tau\to\infty$. 
  \end{assumption}
  
  Relation \eqref{mix-b1} is a mild specification of the mixing in eq. \eqref{eq-ef1}, and a proof of the  latter in fact usually establishes the former.

  Our main result is the following: 
  
  \begin{theorem}\label{thm-uniform} Under Assumptions \ref{assume1} and \ref{assume2}, for any  $\bar s\in(s_*,s_2)$  and any $v_0\in h^{\bar s}$ 
 
  $$
  \lim_{\varepsilon\to0}\sup_{\tau\geqslant0}\|\mathcal{D}(a^{\varepsilon}(\tau;v_0))-\mathcal{D}(a^0(\tau;v_0))\|_{L,h^{s_*}}^*=0,
  $$
  where $a^{\varepsilon}(\tau;v_0)$ and $a^0(\tau;v_0)$ solve respectively equations  \eqref{eq-a1} and \eqref{eq-ef1} with initial conditions
   $a^{\varepsilon}(0;v_0)=a^0(0;v_0)=v_0$. Moreover,  for any $M>0$ the above convergence is uniform for $v_0\in \bar B_M(h^{\bar s})$. 
  \end{theorem}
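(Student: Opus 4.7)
The plan is to split $\sup_{\tau\ge 0}$ into a bounded window $\tau\in[0,T]$ and a tail $\tau>T$. For the window, I extract uniform-in-$\tau$ marginal convergence from the path-space convergence of Theorem~\ref{t_0}~(ii): since evaluation at a single time is $1$-Lipschitz on $C([0,T];h^{s_*})$ and the dual-Lipschitz distance is dominated by the Prokhorov distance, convergence \eqref{conv} with a rate depending only on $M$ (as supplied by Remark~\ref{r_2.4}~(2)) immediately yields
\[
\sup_{\tau\in[0,T]}\|\cD(a^\e(\tau;v_0))-\cD(a^0(\tau;v_0))\|^*_{L,h^{s_*}}\longrightarrow 0,\qquad \e\to 0,
\]
uniformly in $v_0\in\bar B_M(h^{\bar s})$. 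This settles the window for any fixed $T$.

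For $\tau>T$ I insert $\mu^0$ by the triangle inequality. The tail for $a^0$ is controlled directly from Assumption~\ref{assume2}: $\|\cD(a^0(\tau;v_0))-\mu^0\|^*_L\le\frak g_M(\tau)$, which is small once $T$ is large. It remains to bound $\|\cD(a^\e(\tau;v_0))-\mu^0\|^*_L$. The plan is to apply the Markov property at time $\tau_1:=\tau-T$: \eqref{assume-bound11} and Chebyshev give, outside an event of prescribed small probability (uniformly in $\tau_1$, $\e$ and $v_0\in\bar B_M(h^{\bar s})$), that $a^\e(\tau_1;v_0)=w$ lies in a ball $\bar B_{R_0}(h^{\bar s})$; on that good event I compare the conditional law of $a^\e(\tau;v_0)$ with $\cD(a^0(T;w))$ by the window bound applied at the single time $T$, and then compare $\cD(a^0(T;w))$ with $\mu^0$ using $\frak g_{R_0}(T)$.

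The principal obstacle is that \eqref{eq-a1} is time-inhomogeneous, so the Markov property cannot be invoked directly for $a^\e$. My plan is to apply it to the time-homogeneous $v$-process solving \eqref{eq-v1} and translate back through $a^\e(\tau;v_0)=\Phi_{\e^{-1}\Lambda\tau}v^\e(\tau;v_0)$; a short computation then gives, conditionally on $a^\e(\tau_1;v_0)=w$,
\[
\cD\bigl(a^\e(\tau;v_0)\mid w\bigr)=(\Phi_{\e^{-1}\Lambda\tau_1})_{*}\,\cD\bigl(a^\e(T;\Phi_{-\e^{-1}\Lambda\tau_1}w)\bigr).
\]
The random rotation $\Phi_{\e^{-1}\Lambda\tau_1}$ would defeat any uniform-in-$\tau$ estimate if it did not cancel, and making it cancel is the heart of the matter: the effective equation \eqref{eq-ef1} is equivariant under every $\Phi_{\Lambda\sigma}$, because $R$ inherits shift-invariance from the defining average \eqref{limit-eq} and the noise inherits it from the fact that $A_{kl}$ is supported on the diagonal $\lambda_k=\lambda_l$ (and complex Brownian motion is rotationally invariant). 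Hence $\cD(a^0(T;w))=(\Phi_{\e^{-1}\Lambda\tau_1})_{*}\cD(a^0(T;\Phi_{-\e^{-1}\Lambda\tau_1}w))$, and because $\Phi_\theta$ is an isometry of $h^{s_*}$ the dual-Lipschitz distance is invariant under $(\Phi_\theta)_*$; the rotation cancels and we are reduced to a Step~1 bound with rotated initial data of the same $h^{\bar s}$-norm. Choosing $R_0$ first, then $T$, then $\e$, and assembling via convexity of the dual-Lipschitz distance, concludes the proof uniformly in $\tau\ge 0$ and $v_0\in\bar B_M(h^{\bar s})$.
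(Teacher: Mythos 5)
Your argument is correct, and it takes a genuinely different route from the paper's. The paper proceeds by induction over time windows $[nT^*,(n+1)T^*]$: Lemma \ref{average-lm} compares $a^\e$ on each shifted window with an effective-equation solution launched from the random law $\cD(a^\e(nT^*))$, and the inductive propagation of the estimate $\|\cD(a^\e(nT^*))-\mu^0\|_L^*\le\delta_1$ requires the quantitative stability statement of Lemma \ref{p_suff_cond} (if the initial law is $d$-close to $\mu^0$ and has bounded $2\bar m$-moments, the solution stays $g_M(\tau,d)$-close), whose proof via a Gronwall estimate, a cut-off in probability and a Kantorovich--Rubinstein coupling occupies Subsection 3.1. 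You avoid both the induction and that lemma by a single ``look-back'' step: for $\tau>T$ you condition at $\tau-T$, truncate to the high-probability event $\{|a^\e(\tau-T)|_{\bar s}\le R_0\}$ via \eqref{assume-bound11} and Chebyshev, and then need Assumption \ref{assume2} only for \emph{deterministic} data in a ball, integrated by convexity of the dual-Lipschitz distance --- exactly the easy half of the paper's Lemma \ref{p_suff_cond}, step ii). The price is that you must justify the comparison of the conditional law of $a^\e(\tau)$ with $\cD(a^0(T;w))$ uniformly in the conditioning time, and your treatment of this point is in fact more explicit than the paper's: the Markov property of the time-homogeneous $v$-process \eqref{eq-v1} plus the equivariance of \eqref{eq-ef1} under every rotation $\Phi_{\Lambda\sigma}$ (which holds because the Ces\`aro average \eqref{limit-eq} is shift-invariant, $B_{kl}$ vanishes off the blocks $\lambda_k=\lambda_l$, and $\Phi_\theta$ is an isometry of $h^{s_*}$, so the pushforward preserves $\|\cdot\|_L^*$) reduce everything to Remark \ref{r_2.4}.2) for initial data in $\bar B_{R_0}(h^{\bar s})$; the paper's Lemma \ref{average-lm} for arbitrary $T'$ implicitly relies on the same mechanism but states it without proof. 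The order of choices ($R_0$, then $T$ large enough that both $\frak g_{R_0}(T)$ and $\sup_{\tau\ge T}\frak g_{M}(\tau)$ are small, then $\e$) is consistent, and the uniformity in $v_0\in\bar B_M(h^{\bar s})$ comes along for free. What the paper's longer route buys is the explicit modulus $g_M(\tau,d)$ of Lemma \ref{p_suff_cond}, which is of independent interest; your route is shorter and needs strictly less from the mixing hypothesis.
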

  
For $v=(v_1,v_2,\dots)\in h^s$ we  introduce 
the vector of action variables $I(v)=(I_1(v),I_2(v),\dots )$, where $I_k(v)=\frac{1}{2}|v_k|^2$, $k=1,2,\dots$. 
Then $I(v) \in  h_I^s \cap  \mathbf{R}_+^\infty$, where  $h^s_I$ is the weighted $l^1$-space  with the norm 
$
 |I|_{I,s}=\sum_{k=1}^\infty(|\lambda_k|^s+1)|I_k|.
 $
Since the interaction representation does not change the actions, then for  the action variables of  solutions for the original  equations
 \eqref{eq-v1}  we have 
\begin{corollary}\label{coroll-cgl}
Under the assumptions of Theorem \ref{thm-uniform}  actions of a  solution $v^\e(\tau; v_0)$ for eq.~\eqref{eq-v1} with $v_0\in h^{\bar s}$, $\bar s>s_*$
 satisfy
 \[
  \lim_{\eps\to0}\,
  \sup_{\tau\geqslant0}\|\cD\big(I( v^{\eps}(\tau;v_0)) \big) -\cD\big((I( a^{0}(\tau;v_0))\big) \|_{L,h_I^{s_*}}^* =0.
  \]
\end{corollary}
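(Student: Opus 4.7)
The plan is to reduce Corollary~\ref{coroll-cgl} to Theorem~\ref{thm-uniform} by a truncation argument, exploiting that the interaction representation preserves actions and that the action map is locally Lipschitz with linearly growing modulus.

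First I would observe that $a^\e_k(\tau) = e^{i\e^{-1}\lambda_k \tau} v^\e_k(\tau)$ gives $|a^\e_k(\tau)| = |v^\e_k(\tau)|$ pathwise, hence $I(v^\e(\tau;v_0)) = I(a^\e(\tau;v_0))$ a.s., so it suffices to prove the stated uniform convergence with $v^\e$ replaced by $a^\e$. I would then check that $I : h^{s_*} \to h_I^{s_*}$ is locally Lipschitz with linearly growing modulus: using $||u_k|^2 - |w_k|^2| \le (|u_k|+|w_k|)|u_k - w_k|$ and Cauchy--Schwarz,
\[
|I(u) - I(w)|_{I, s_*} \le C R\, |u - w|_{s_*}, \qquad u, w \in \bar B_R(h^{s_*}).
\]

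Next, fix a test function $f \in C(h_I^{s_*})$ with $|f|_{L, h_I^{s_*}} \le 1$ and a cutoff radius $R \ge 1$, and choose a $2$-Lipschitz function $\chi_R : h^{s_*} \to [0,1]$ equal to $1$ on $\bar B_R(h^{s_*})$ and vanishing outside $\bar B_{R+1}(h^{s_*})$. The product $F_R := (f \circ I)\,\chi_R$ is then globally Lipschitz on $h^{s_*}$ with $|F_R|_{L, h^{s_*}} \le C_1(R)$, where $C_1(R) = O(R)$. Splitting
\[
\bigl|\EE f(I(a^\e(\tau;v_0))) - \EE f(I(a^0(\tau;v_0)))\bigr| \le \bigl|\EE F_R(a^\e) - \EE F_R(a^0)\bigr| + 2\sup_{\ast \in \{\e,0\}} \PP\bigl(|a^\ast(\tau;v_0)|_{s_*} > R\bigr),
\]
I would estimate the tail term by Markov's inequality applied to the uniform-in-$\tau$ moment bounds \eqref{assume-bound11} and \eqref{up-1} in the stronger norm $|\cdot|_{\bar s}$ (using $\bar s > s_*$ to dominate $|\cdot|_{s_*}$), obtaining a bound $C_2(|v_0|_{\bar s})\, R^{-2\bar m}$ uniform in $\tau \ge 0$ and $\e \in (0,1]$. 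The first term is at most $C_1(R)\,\|\cD(a^\e(\tau;v_0)) - \cD(a^0(\tau;v_0))\|^*_{L, h^{s_*}}$, which by Theorem~\ref{thm-uniform} tends to zero uniformly in $\tau$ as $\e \to 0$. Choosing $R$ large first to absorb the tail, then $\e$ small, and finally taking the supremum over $f$, delivers the claim.

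The main obstacle is purely technical: the composition $f \circ I$ is only locally Lipschitz on $h^{s_*}$, so Theorem~\ref{thm-uniform} cannot be invoked directly. The uniform-in-$\tau$ moment bounds inherited from Assumption~\ref{assume1} (via \eqref{assume-bound11} and \eqref{up-1}) are precisely what is needed to tame the tails, after which the proof reduces to routine bookkeeping.
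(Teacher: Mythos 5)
Your proof is correct and follows the same route the paper intends: the paper gives no explicit argument, merely noting that the interaction representation preserves actions and letting the corollary follow from Theorem~\ref{thm-uniform}. Your truncation argument correctly supplies the only nontrivial detail left implicit --- that $I$ is merely locally Lipschitz from $h^{s_*}$ to $h_I^{s_*}$, which is tamed by the uniform-in-$\tau$ moment bounds \eqref{assume-bound11} and \eqref{up-1} exactly as you describe.
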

%

\subsection{Examples} 

For a finite-dimensional stochastic equation 
  \be\label{fin_dim}
  \dot a_k(\tau) = e^{i\e^{-1} \lambda_k\tau} P_k(\Phi_{-\e^{-1} \Lambda\tau} a) +  e^{i\e^{-1} \lambda_k\tau}\sum_{l=1}^N \M_{kl} \dot\beta_l (\tau), \quad
  k=1, \dots, N, 
  \ee
  where $a(\tau) =(a_1,\dots, a_N)(\tau) \in \C^N$ and $\Lambda=( \lambda_1, \dots, \lambda_N)$, satisfying a natural analogy of Assumptions~\ref{assume1},~\ref{assume2},   a natural version of Theorem~\ref{thm-uniform} holds. 
  If the matrix $(\M_{kl})$ is non-degenerate and Assumption~\ref{assume1} is met,  then a convenient sufficient condition for 
  Assumption~\ref{assume2}  follows from the Khasminski criterion for mixing (details will be given in paper on the finite-dimensional stochastic averaging
  under preparation).\footnote{ In \cite[Theorem~2.9]{AD} the uniform in time convergence as above is proved for a class of systems \eqref{fin_dim}. The proof
  in \cite{AD} is based on the observation that  the corresponding  $v$-equations are mixing with a rate of mixing, independent from $\e$. }
   For infinite-dimensional stochastic systems an instrumental criterion of  mixing is not known yet. We believe that as in finite dimensions,
  Assumption~\ref{assume2} holds for effective equations for various equations  \eqref{m-equation1}, satisfying Assumption~\ref{assume1}, at least if 
  the random forces   \eqref{eta}  is such that 
  all $b_l$'s are non-zero. Examples below are given to support this belief.

  \begin{example}
  Consider eq. \eqref{m-equation1}, where $\mathscr P(\nabla u, u) =\mathscr P(u)$ has the form \eqref{nonlin} with $\frak z=-1$. Since the function $f_p$ is monotone, 
  then mapping  $\mathscr P$ also is monotone in the sense that 
  $$
  \langle \mathscr P(u) -\mathscr P(v), u-v \rangle \ge 0 \quad \forall u,v \in L^{\infty}( T^D; \C).
  $$
  From definition \eqref{limit-eq} we see that operator $R$ is monotone as well. Taking any two vectors $v^1, v^2 \in h^s$ and denoting 
  $
  w(\tau) = a^0(\tau; v^1)- a^0(\tau; v^2)
  $
  we immediately derive from the equation for $w$ that 
  $
  (d/dt) |w^\om(\tau)|_0^2 \le -\tfrac12 \lambda_1  |w^\om(\tau)|_0^2.  
  $
  So 
  $$
   |w^\om(\tau)|_0 \le e^{-\lambda_1 \tau} | v^1-v^2|_0 \quad \forall\, \tau\ge0,\ \forall\om. 
  $$
  This very strong a-priori  estimate implies Assumption \ref{assume2} after some non-complicated work. 
  \end{example}
  
  \begin{example}\label{e_2}
  Now let  $\mathscr P(\nabla u, u) =\mathscr P(u)$ has the form \eqref{nonlin}, where  $\Re\frak z<0$ and $\Im\frak z<0$. Then relation 
  \eqref{mix-b1} with 
  $
  {\frak g}_M(\tau) = C(M) e^{-\kappa\tau},
  $
  $\kappa>0$,  and with sufficiently large integer $s_*$
  follows from the abstract theorems in \cite[Theorem~2.1]{Oda} and \cite[Theorem~3.1.7]{KS} since solutions $a^0$ inherit 
  estimates for solutions $a^\e$ via convergence \eqref{conv}. We leave to the reader details of this derivation (which are not quite trivial). 
  \end{example}

   \begin{example}\label{e_3}
   Consider eq. \eqref{m-equation1} with  $\mathscr P(\nabla u, u) =  -i \rho |u|^2 u$, with $V=0$ and with a 
   hyperviscosity instead of viscosity:
   \be\label{NLS}
   \dot u -i \e^{-1} \Delta u = -(-\Delta +1)^ru -i\rho |u|^2 u + \dot\xi(\tau, x),  \quad x \in \T^D_L =\R^D/(L \Z^D).
   \ee
   Here $r\in\Nn$, $\rho>0$ is a scaling factor and $\xi$ has the form $\eqref{eta}$, where all $b_l$'s are non-zero. Now
   $A_V= -\Delta$, so its eigenfunctions are  exponents $e^{i2\pi s\cdot x}$, $s \in L^{-1} \Z^D$. The action variables $I_s(\tau)$ of solutions 
   $u(\tau,x )$ are naturally parametrised by $s \in L^{-1} \Z^D$. The theory of wave turbulence (WT), among other things, examine the behaviour of 
   the expectations of the     actions $\EE I_s(\tau)$ under the limit of WT:
   $$
  \e\to0, \quad L\to\infty, 
   $$
  when $\rho$ is properly scaled with $\e$ and $L$. See \cite{Naz} and 
   the introduction in \cite{DK}. For any dimension $D$ solutions of eq.~\eqref{NLS}
  with $r$ sufficiently large in terms of $D$ satisfy Assumption~\ref{assume1} by the same straightforward proof as for eq.~\eqref{m-equation1} with
  $\mathscr P$ as in \eqref{nonlin}, and the equation also meets Assumption~\ref{assume2}  with sufficiently large integer $s_*$ 
   in view of the abstract theorems, mentioned in Example~\ref{e_2}. So
  Corollary~\ref{coroll-cgl} applies to eq.~\eqref{NLS} with $\rho$ and $L$ fixed, under the limit $\e\to0$ and imply that the expectations of the 
  actions of solutions for 
  \eqref{NLS} converge to those of solutions for  the corresponding effective equation, uniformly in time. The limit $\e\to0$ in eq.~\eqref{NLS}  and in
  similar equations is known in nonlinear physics as the limit of {\it discrete turbulence}. See  \cite{Naz}, \cite{KM}  and 
   \cite[Sections~1.2,\,12.1]{DK}. 
     \end{example}

   \section{Proof of Theorem \ref{thm-uniform}} \label{s_3}
   Below we always assume Assumptions \ref{assume1} and \ref{assume2}. We fix $\bar s>s_*$ and $v_0 \in h^{\bar s}$ as in Theorem~\ref{thm-uniform}.
   We abbreviate  $\|\cdot\|_{L,h^{s_*}}^*$ to $\|\cdot\|_L^*$ and  $a^{\epsilon}(\tau;v_0)$ to
    $a^\epsilon(\tau)$. 
For any $T'\ge0$ 
 we denote by $a_{T'}^0(\tau)$ a weak solution of  the effective equation \eqref{eq-ef1} such that 
$$
\cD (a^0_{T'}(0) )= \cD( a^\eps(T')).
$$  
Note that solution  $a^0_{T'}(\tau)$ depends on $\eps$ and  that  $a^0_0(\tau) = a^0(\tau;v_0)$. 

 The following lemma  follows from Theorem \ref{t_0},   Assumption \ref{assume1}.(2)  and Remark~\ref{r_2.4}.2). 
 
\begin{lemma}\label{average-lm} For  any $\delta>0$ and $T>0$   there exists $\eps_1 =\eps_1(\delta,T)>0$ such that if $\eps\le \eps_1$, then
\be\label{gr}
\sup_{\tau\in[0,T]}\|\cD(a^{\eps}(T'+\tau)) - \cD(a_{T'}^{0}(\tau))\|_{L,h^{s_* }}^*\leqslant \delta/2 , \quad \forall\, T'\geqslant0.
\ee
\end{lemma}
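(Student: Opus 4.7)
The plan is to reduce the claim to the $T'=0$ case furnished by Theorem~\ref{t_0} together with its random-initial-data extension Remark~\ref{r_2.4}.2), by combining the time-homogeneity of the $v$-equation \eqref{eq-v1} with the rotational symmetry of the effective equation \eqref{eq-ef1}.

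Fix $T'\ge 0$. Using the strong Markov property for \eqref{eq-v1}, the process $\tilde v(\tau):=v^\eps(T'+\tau;v_0)$ solves \eqref{eq-v1} on $\tau\ge 0$ with initial datum $\tilde v(0)=v^\eps(T';v_0)$ and a driving noise independent of $\mathcal{F}_{T'}$. Passing to the interaction representation anew from $\tau=0$, define $\tilde a(\tau):=\Phi_{\eps^{-1}\Lambda\tau}\tilde v(\tau)$; then $\tilde a$ solves \eqref{eq-a1} with $\tilde a(0)=\tilde v(0)$, and the identity
\[
a^\eps(T'+\tau;v_0)=\Phi_{\eps^{-1}\Lambda T'}\,\tilde a(\tau),\qquad \tau\ge 0,
\]
is immediate. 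Because each $\Phi_\theta$ is an isometry of every $h^s$, Assumption~\ref{assume1}.(2) yields $\sup_{T'\ge 0}\mathbf{E}|\tilde a(0)|_{\bar s}^{2\bar m}\le C(|v_0|_{\bar s})$, so Chebyshev produces $M=M(\delta,v_0)$ with $\sup_{T'\ge 0}\mathbf{P}(|\tilde a(0)|_{\bar s}>M)\le \delta/8$.

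Since $\tilde a(0)$ is independent of the noise driving $\tilde a$, Remark~\ref{r_2.4}.2) applies to $\tilde a$ on the event $\{|\tilde a(0)|_{\bar s}\le M\}$: there is $\eps_1=\eps_1(\delta,T,M)$ such that for $\eps\le\eps_1$,
\[
\big\|\cD\big(\tilde a(\cdot)|_{[0,T]}\big)-\cD\big(\tilde a^{0}(\cdot)|_{[0,T]}\big)\big\|_{L,\,C([0,T];h^{s_*})}^{*}\le \delta/4,
\]
where $\tilde a^0$ is the effective-equation solution with $\tilde a^0(0)=\tilde a(0)$; the excess mass $\delta/8$ on the complementary event contributes at most $2\cdot\delta/8=\delta/4$ to the dual-Lipschitz distance, yielding an overall bound of $\delta/2$ between $\cD(\tilde a)$ and $\cD(\tilde a^0)$ on $[0,T]$.

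The final step is to identify $\cD(\Phi_{\eps^{-1}\Lambda T'}\tilde a^0)$ with $\cD(a^0_{T'})$. A change of integration variable in \eqref{limit-eq} gives $R(\Phi_{\Lambda s}v)=\Phi_{\Lambda s}R(v)$ for every $s\in\mathbb R$, and the condition $A_{kl}=0$ unless $\lambda_k=\lambda_l$ makes $B$ block-diagonal on the eigenspaces of $A_V$, so on each block the common phase $e^{i\eps^{-1}\lambda_k T'}$ can be absorbed into the driving Brownian motions by a unitary relabelling preserving their joint law. Hence $\Phi_{\eps^{-1}\Lambda T'}\tilde a^0$ is again a weak solution of \eqref{eq-ef1}, starting from $\Phi_{\eps^{-1}\Lambda T'}\tilde a(0)=a^\eps(T';v_0)$ in distribution; weak uniqueness for \eqref{eq-ef1} in $h^{s_*}$ (inherited from the local Lipschitz property of $R$ granted by Assumption~\ref{assume1}.(1)) identifies its law with $\cD(a^0_{T'})$. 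Since $\Phi_{\eps^{-1}\Lambda T'}$ is an $h^{s_*}$-isometry, pushing forward preserves the dual-Lipschitz distance, and the required uniform bound follows. The main technical point is precisely this last identification: one must verify with care that the block structure of $(A_{kl})$ really converts the shifted noise term of \eqref{eq-a1} into a bona fide driving Brownian motion for \eqref{eq-ef1}, and that this, together with weak uniqueness, pins down the law of $a^0_{T'}$.
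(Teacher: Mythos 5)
Your proof is correct and follows essentially the route the paper intends: the paper derives this lemma in one line from Theorem~\ref{t_0}, Assumption~\ref{assume1}.(2) and Remark~\ref{r_2.4}.2), and your argument is exactly a careful implementation of that — restart the autonomous $v$-equation at time $T'$, pass to a fresh interaction representation, apply the random-initial-data version of the averaging theorem with the moment bound \eqref{assume-bound11} and a Chebyshev cut-off, and identify the rotated effective solution via the $\Phi_{\Lambda s}$-equivariance of $R$ and the block structure of $(A_{kl})$. The only ingredient the paper leaves implicit that you rightly make explicit is this last phase-equivariance/weak-uniqueness step, and your treatment of it is sound.
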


Since by Assumption \ref{assume2}, $\mathcal{D}(a^0(\tau;0))\to \mu^0$ in $\mathcal{P}(h^{s_*})$ as $\tau\to\infty$, 
 then from  estimate \eqref{up-1} and Fatou's lemma we derive  that 
\begin{equation}\label{mu0-b1}
\langle|v|_{\bar s}^{2\bar m},\mu^0\rangle\leqslant  C'_{\bar s}(0):=C_0.
\end{equation}

We need the following statement for the mixing in eq. \eqref{eq-ef1}.
  \begin{lemma}\label{p_suff_cond} 
  For   any solution $a^0(\tau;\mu)\in h^{s*} $, $\tau\geqslant0$, of  effective equation \eqref{eq-ef1} such that $\mathcal{D}(a^0(0;\mu))=:\mu$, where  
  the measure $\mu$ satisfies 
  $$
  \langle |a|_{s_*}^{2\bar m},\mu(da)\rangle=\mathbf{E}|a^0(0;\mu)|_{s_*}^{2\bar m}\leqslant M\quad \text{ for some $M>0$, }
  $$
  we have 
  \be\label{mix-b2}
  \| \cD\big( a^0(\tau ; \mu) \big)- \mu^0\|_{L}^*  \le g_M(\tau, d) \quad \text{if} \;\; \|\mu-\mu^0\|_{L}^* \le d \le2.
  \ee
  Here the function $g:\mathbb{R}_+^3\to\mathbb{R}_+$, $(M,\tau, d)\to g_M(\tau, d)$ is continuous, vanishes with  $d$, converges to zero when $\tau\to\infty$ and is such that for each fixed $M\geqslant0$ the function $(\tau,d)\to g_M(\tau,d)$ is uniformly continuous in $d$ for $(\tau,d)\in[0,\infty)\times[0,2]$. 
     \end{lemma}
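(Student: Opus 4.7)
The plan is to bound $\|\cD(a^0(\tau;\mu))-\mu^0\|_L^*$ by the minimum of two complementary estimates. Since $\mu^0$ is invariant for the Markov semigroup $Q_\tau f(v):=\mathbf{E}f(a^0(\tau;v))$ of the effective equation, stationarity yields
\[
\|\cD(a^0(\tau;\mu))-\mu^0\|_L^* = \sup_{|f|_{L,h^{s_*}}\le1}|\langle Q_\tau f,\mu-\mu^0\rangle|,
\]
and the tails of $\mu$ and $\mu^0$ on $\bar B_R(h^{s_*})^c$ are bounded by $M/R^{2\bar m}$ and $C_0/R^{2\bar m}$ respectively (the first by Markov inequality applied to the assumed $2\bar m$-moment, the second by \eqref{mu0-b1}).

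First I will derive a mixing bound, which will govern large $\tau$. Invariance of $\mu^0$ gives $\langle Q_\tau f,\mu-\mu^0\rangle=\int[Q_\tau f(v)-\langle f,\mu^0\rangle]\,\mu(dv)$; the integrand is bounded by $\|\cD(a^0(\tau;v))-\mu^0\|_L^*\le\frak g_R(\tau)$ for $v\in\bar B_R(h^{s_*})$ via Assumption~\ref{assume2}, and the tail part is bounded trivially by $2$, yielding
\[
\|\cD(a^0(\tau;\mu))-\mu^0\|_L^* \le \frak g_R(\tau)+2M/R^{2\bar m}\qquad\forall R>0.
\]
Next I will derive a Lipschitz bound, which will govern small $d$. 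The key preliminary step is to show that for each $R>0$ there exists a function $K(R,\tau)$, continuous in $\tau$, with $\mathbf{E}|a^0(\tau;v)-a^0(\tau;w)|_{s_*}\le K(R,\tau)|v-w|_{s_*}$ for $v,w\in\bar B_R(h^{s_*})$; this follows because additive noise cancels in the difference of two solutions, so the difference satisfies a random PDE controlled by parabolic smoothing from the spectral part $-\lambda_k\zeta_k$ and a (fractional) Gronwall argument exploiting the Lipschitz-on-bounded-sets property from Assumption~\ref{assume1}.(1), the size of trajectories being controlled via \eqref{up-1}. Then $Q_\tau f|_{\bar B_R(h^{s_*})}$ is $K(R,\tau)$-Lipschitz; extending it globally by the McShane construction followed by truncation to $[-1,1]$ yields $\widetilde{Q_\tau f}$ on $h^{s_*}$ with $|\widetilde{Q_\tau f}|_{L,h^{s_*}}\le 1+K(R,\tau)$. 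Splitting $\langle Q_\tau f,\mu-\mu^0\rangle=\langle\widetilde{Q_\tau f},\mu-\mu^0\rangle+\langle Q_\tau f-\widetilde{Q_\tau f},\mu-\mu^0\rangle$, the first term is at most $(1+K(R,\tau))d$ and the second, supported on $\bar B_R(h^{s_*})^c$ and pointwise bounded by $2$, is at most $2(M+C_0)/R^{2\bar m}$, so
\[
\|\cD(a^0(\tau;\mu))-\mu^0\|_L^* \le (1+K(R,\tau))d+2(M+C_0)/R^{2\bar m}\qquad\forall R>0.
\]

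Finally I will take $g_M(\tau,d)$ to be an appropriate continuous majorant of the infimum over $R>0$ of the minimum of these two bounds. The mixing bound (with $R$ then $\tau$ large) gives decay as $\tau\to\infty$, while the Lipschitz bound at $d=0$ with $R\to\infty$ gives $g_M(\tau,0)=0$ and hence vanishing with $d$. The hard part will be that $K(R,\tau)$ typically grows exponentially in $\tau$ from Gronwall, so the Lipschitz bound is useful only on finite time intervals, and interpolating the two bounds so that the resulting $g_M$ is uniformly continuous in $d\in[0,2]$ \emph{uniformly} over $\tau\in[0,\infty)$ is delicate. I will resolve this by introducing a threshold $T_0=T_0(d)\to\infty$ as $d\to0$ and a scale $R=R(d)\to\infty$ balanced so that $K(R(d),T_0(d))\,d\to0$ and $\frak g_{R(d)}(T_0(d))\to0$ simultaneously; using the Lipschitz bound for $\tau\le T_0(d)$ and the mixing bound for $\tau\ge T_0(d)$, the $d$-dependence disappears precisely in the regime where $K(R,\tau)$ would misbehave, and a final mollification of the piecewise bound yields the required joint continuity in $(M,\tau,d)$ and uniform continuity in $d$.
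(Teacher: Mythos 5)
Your overall architecture matches the paper's: a $d$-independent mixing bound obtained by truncating $\mu$ to a ball of radius $R$ and paying a Chebyshev tail $M/R^{2\bar m}$ (this is exactly step~ii of the paper's proof, yielding $\hat{\frak g}_M(\tau)$), combined with a $d$-dependent bound coming from continuity of the law of $a^0(\tau;\cdot)$ in the initial condition, and a final minimum/interpolation whose uniform continuity in $d$ is guaranteed by the $d$-independent term. Your dual formulation via the Markov semigroup $Q_\tau$ and a McShane extension is a genuinely different (and in principle cleaner) route than the paper's, which instead passes from the dual-Lipschitz distance to the Kantorovich distance via a moment interpolation \eqref{x7} and then uses the Kantorovich--Rubinstein coupling of $\mu$ and $\mu^0$.

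However, your ``key preliminary step'' --- the claim that $\mathbf{E}\,|a^0(\tau;v)-a^0(\tau;w)|_{s_*}\le K(R,\tau)\,|v-w|_{s_*}$ for $v,w\in\bar B_R(h^{s_*})$ --- is a genuine gap. The Duhamel--Gronwall argument you invoke gives a pathwise bound of the form $|w^\om(\tau)|_{s_*}\le \bar d\,(1+CKe^{c_1K})$ only on the event where $X^\om(t)=1+|a_1^\om(t)|_{s_*}^{\bar m}+|a_2^\om(t)|_{s_*}^{\bar m}$ stays below $K$ on $[0,\tau]$; taking the expectation of the unrestricted Gronwall factor requires exponential moments of $\sup_{t\le\tau}X^\om(t)$, whereas Assumption~\ref{assume1}.(2) and \eqref{up-1} supply only the $2\bar m$-th polynomial moments. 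So the constant $K(R,\tau)$ you need does not follow from the stated hypotheses, and with it falls the Lipschitz property of $Q_\tau f$ on balls and hence your second main estimate. The paper circumvents precisely this: it restricts to the event $\Omega_K(\tau)=\{\sup_{t\le\tau}X\ge K\}^c$, bounds $\PP(\Omega_K(\tau)^c)\lesssim(\tau+1)K^{-1}$ by Chebyshev, and then balances the two contributions by choosing $K=\ln\ln(\bar d^{-1}\vee3)$, obtaining a bound $g^1_M(\tau,\bar d)$ that vanishes with $\bar d$ but is \emph{not} linear in $\bar d$. Your route can be repaired along the same lines: the truncation yields an approximate Lipschitz property $|Q_\tau f(v)-Q_\tau f(w)|\le L|v-w|_{s_*}+\epsilon(K,\tau)$ on $\bar B_R$, and any such function is within $\epsilon$ of a genuinely $L$-Lipschitz function (take $\tilde h(v)=\inf_w(h(w)+L|v-w|_{s_*})$), after which your splitting of $\langle Q_\tau f,\mu-\mu^0\rangle$ goes through with an extra additive error. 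Without this repair the proposed proof does not close.
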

 A proof of the lemma      is rather  straightforward but lengthy. It is   given  below in Subsection~\ref{ss_proof_prop}. 
 \smallskip
  
  We  denote  
 $$
 M_*:=C'_{\bar s}(|v_0|_{\bar s} ), 
 $$
 where $C'_{\bar s}(|v_0|_{\bar s})$ is as  in \eqref{assume-bound11} and   \eqref{up-1}. Constants in the estimates below depend on $M_*$, but this dependence usually is not indicated.

\begin{lemma}\label{random-initial-average} 
Take  any  $\delta>0$ and 
choose a $T^* = T^*(\delta)>0$, satisfying   
$$
g_{M_*}( T, 2) \le \delta/4 \qquad \forall\, T\ge T^*.
$$
Then there exists 
$\eps_2 =\eps_2(\delta)>0$ such that if $\eps \le \eps_2$ and  $\|\cD(a^{\eps}(T'))-\mu^0\|_{L}^*\leqslant\delta$ for some $T'\ge0$,   then 
\begin{equation}\label{83a}
\|\cD(a^{\eps}(T'+T^*))-\mu^0\|_{L}^*\leqslant  {\delta}, 
\end{equation}
and 
\begin{equation}\label{83b}
\sup_{\tau\in[T',T'+T^*]}\| \cD(a^{\eps}(\tau))-\mu^0\|_{L}^*\leqslant \tfrac{\delta}2 + \sup_{\tau\ge0} 
g_{M_*}(\tau, \delta ) . 
\end{equation}
\end{lemma}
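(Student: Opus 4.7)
The plan is to bootstrap the short-time averaging (Lemma~\ref{average-lm}) with the long-time mixing of the effective equation (Lemma~\ref{p_suff_cond}) by a triangle-inequality argument that inserts, between $\cD(a^\eps(T'+\tau))$ and $\mu^0$, the distribution of the auxiliary solution $a^0_{T'}(\tau)$ of the effective equation. Namely, for every $\tau \in [0, T^*]$, I would write
\[
\|\cD(a^{\eps}(T'+\tau))-\mu^0\|_{L}^*
\;\le\;
\|\cD(a^{\eps}(T'+\tau))-\cD(a^0_{T'}(\tau))\|_{L}^*
\;+\;
\|\cD(a^0_{T'}(\tau))-\mu^0\|_{L}^* .
\]

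For the first summand, I would invoke Lemma~\ref{average-lm} with the horizon $T := T^*$ and set $\eps_2(\delta) := \eps_1(\delta, T^*)$; this forces the first term to be at most $\delta/2$ for every $\tau \in [0, T^*]$, uniformly in $T'$.

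For the second summand, I would apply Lemma~\ref{p_suff_cond} to the solution $a^0_{T'}(\tau)$, which by construction starts from $\mu := \cD(a^{\eps}(T'))$. Two hypotheses must be checked: first, $\|\mu - \mu^0\|_{L}^* \le \delta$ holds by assumption; second, the $2\bar m$-moment in $h^{s_*}$ is controlled because $\bar s > s_*$ implies $|a|_{s_*} \le |a|_{\bar s}$, and \eqref{assume-bound11} gives
\[
\EE |a^0_{T'}(0)|_{s_*}^{2\bar m} \;=\; \EE |a^\eps(T')|_{s_*}^{2\bar m}
\;\le\; \EE |a^\eps(T')|_{\bar s}^{2\bar m} \;\le\; C'_{\bar s}(|v_0|_{\bar s}) \;=\; M_*.
\]
Consequently, $\|\cD(a^0_{T'}(\tau)) - \mu^0\|_L^* \le g_{M_*}(\tau, \delta)$ for all $\tau \ge 0$. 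Since the bound $g_M(\tau, d)$ from Lemma~\ref{p_suff_cond} is (without loss of generality) non-decreasing in $d$ — it is an upper bound over the increasing family of measures $\{\mu: \|\mu - \mu^0\|_L^* \le d\}$, so we may replace it by the sup over $d' \le d$ — we have $g_{M_*}(T^*, \delta) \le g_{M_*}(T^*, 2) \le \delta/4$ by the defining property of $T^*$. Combining with the $\delta/2$ bound yields \eqref{83a} at $\tau = T^*$, while for arbitrary $\tau \in [0, T^*]$ the cruder estimate $g_{M_*}(\tau, \delta) \le \sup_{\tau \ge 0} g_{M_*}(\tau, \delta)$ yields \eqref{83b}.

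The approach is essentially a bookkeeping exercise once the three ingredients (Theorem~\ref{t_0}/Lemma~\ref{average-lm} for the coupling on bounded intervals, Lemma~\ref{p_suff_cond} for the mixing with explicit dependence on the initial dual-Lipschitz distance, and the uniform moment bound \eqref{assume-bound11}) are in place, so I do not anticipate a genuine obstacle. The only delicate point is ensuring that the dependence on $d$ in Lemma~\ref{p_suff_cond} can be used monotonically; as observed above this is harmless, but it explains why the lemma is formulated with an inequality $\|\mu - \mu^0\|_L^* \le d$ rather than an equality.
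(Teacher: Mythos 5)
Your proof is correct and follows essentially the same route as the paper: insert $\cD(a^0_{T'}(\tau))$ via the triangle inequality, bound the first term by Lemma~\ref{average-lm} with horizon $T^*$, and bound the second term by Lemma~\ref{p_suff_cond} using the moment bound \eqref{assume-bound11} to take $M=M_*$. Your worry about monotonicity in $d$ is indeed harmless — as you note, since \eqref{mix-b2} is stated for any $d$ with $\|\mu-\mu^0\|_L^*\le d\le 2$, one may simply apply it with $d=2$ (the dual-Lipschitz distance never exceeds $2$) to get $g_{M_*}(T^*,2)\le\delta/4$ directly, which is exactly what the paper does.
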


\begin{proof} 
Let us  choose  $\eps_2(\delta)= \eps_1(   \frac{\delta}{2},T^*(\delta))$, where $\eps_1(\cdot)$ is 
 as in Lemma \ref{average-lm}. Then  we get from \eqref{gr}, \eqref{up-1}, \eqref{mix-b2} with $M=M_*$ 
and the definition of $T^*$   that  for  $\eps\le\eps_2$, 
\[ \begin{split}
\|\cD(a^{\eps}(T'+T^*))-\mu^0\|_{L}^*\leqslant\|\cD(a^{\eps}(T'+T^*))
&-\cD(a^{0}_{T'}(T^*))\|_{L}^* \\
&+\|\cD(a^{0}_{T'}(T^*))-\mu^0\|_{L}^*\leqslant {\delta}.
\end{split}
\]
This proves \eqref{83a}.  Next, in view of  \eqref{gr} 
 and \eqref{mix-b2}, 
\[\begin{split}&\sup_{\theta\in[0,T^*]}\|\cD(a^{\eps}(T'+\theta))-\mu^0\|_{L}^*\\
&\leqslant\sup_{\theta\in[0,T^*]}\|\cD(a^{\eps}(T'+\theta))-\cD(a_{T'}^{0}(\theta))\|_{L}^*+\sup_{\theta\in[0,T^*]}\|\cD(a_{T'}^{0}(\theta))-\mu^0\|_{L}^*\\
&\leqslant\frac{\delta}{2}+\max_{\theta \in[0,T^*] }g_{M_*}(\theta,  \delta). 
 \end{split}\]
This implies \eqref{83b}. 
\end{proof}

We are now ready to prove Theorem \ref{thm-uniform}. 
\begin{proof}[of Theorem \ref{thm-uniform}.]
Let us fix arbitrary $\delta>0$ and take some $0<\delta_1\le \delta/4$. Below in the proof the functions $\eps_1$, $\eps_2$ and $T^*$ are as in 
Lemmas \ref{average-lm} and \ref{random-initial-average}.

i) By the definition of $T^*=T^*(\delta_1)$, \eqref{up-1}  and \eqref{mix-b2}, 
\be\label{84}
\| \cD\big( a^0_{T'} (\tau)) - \mu^0\|_{L}^* \le  \delta_1/4 \quad \forall\, \tau\ge T^*,
\ee
for any $T'\ge0$. 

ii)  By Lemma \ref{average-lm}, if $\eps\le \eps_1=\eps_1(\frac{\delta_1}{2}, T^*)>0$, then 
\be\label{85}
\sup_{0 \le \tau \le T^*} 
\| \cD\big( a^\eps (\tau)) -   \cD\big( a^0 (\tau;v_0) \big) \|_{L}^* \le     \tfrac{\delta_1}2. 
\ee
In particular, in view of \eqref{84} with $T'=0$, 
\be\label{855}
\| \cD\big( a^\eps (T^*)\big) -  \mu^0 \|_{L}^* <  \delta_1.
\ee

iii) 
By \eqref{855} and 
\eqref{83a} with $\delta= \delta_1$ and with  $T'= nT^*$, $n=1, 2, \dots $ we get inductively  that 
\be\label{856}
\| \cD\big( a^\eps (nT^*)\big) -  \mu^0 \|_{L}^* \le  \delta_1 \quad \forall\, n\in\Nn,
\ee
if $\eps\le\eps_2=\eps_2(\delta_1)$. 

iv) Now by \eqref{856} and \eqref{83b} with $\delta= \delta_1$, 
 for any $n\in \Nn$ and $0\le \theta\le T^*$, 
\be\label{857}
\| \cD\big( a^\eps (nT^* +\theta)) -  \mu^0 \|_{L}^* \le\delta_1/2 + \sup_{\theta\ge0} g_{M_*}(\theta, \delta_1),
\ee
if $\eps\le\eps_2(\delta_1)$.

v) Finally, if $\eps \le \eps_\# (\delta_1) =  \min\big\{\eps_1, 
 \eps_2\big\}$, then by \eqref{85} if $\tau\le T^*$ and by \eqref{84}+\eqref{857} if $\tau\ge T^*$
 we have that  
$$
\| \cD\big( a^\eps (\tau)) -   \cD\big( a^0 (\tau;v_0) \big) \|_{L}^* \le \delta_1 + \sup_{\theta\ge0} g_{M_*}(\theta, \delta_1)\qquad \forall\, \tau\ge0.
$$
By Lemma \ref{p_suff_cond}, for  $M_*$ fixed the function   $g_{M_*}(\theta, d)$ is uniformly continuous in $d$ and 
vanishes at $d=0$. So there exists $\delta^* =\delta^*(\delta)$, which we may assume to be $\le \delta/4$, such that if $\delta_1 =\delta^*$, then 
$g_{M_*}(\theta, \delta_1) \le \delta/2$  for every $\theta\geqslant0$. Then by the estimate above,
$$
\| \cD\big( a^\eps (\tau)) -   \cD\big( a^0 (\tau;v_0) \big) \|_{L}^* \le
 \delta \quad\text{if} \quad \eps \le \eps_*(\delta) :=   \eps_\# (\delta^*\big(\delta)\big)>0, 
$$
for every positive $\delta$. This proves the theorem's assertion. 
\end{proof}

\subsection{Proof of Lemma \ref{p_suff_cond} }\label{ss_proof_prop}  
In this proof we write solutions of effective equation \eqref{eq-ef1} as $a(\tau)$ (rather than $a^0(\tau)$). 
 
 i) At this step, for any non-random $v^1, v^2\in  \bar B_M(h^{s_*})$ we denote  $a_j^\omega(\tau) = a^\omega(\tau, v^j)$, $j=1,2$,
 and   examine the distance 
 $ \| \cD (a_1(\tau))-  \cD (a_2(\tau))\|_L^*$ 
  as a function of $\tau$ and $|v^1-v^2|_{s_*}$. We assume that $| v^1-v^2|_{s_*}\le \bar d $ for some $\bar d\ge0$ and set 
 $w^\omega(\tau) = a_1^\omega(\tau) -a_2^\omega(\tau)$. 
  We have 
 $$
 \dot w^\omega  
   = -  A_V w^\omega+R(a_1^\omega) - R(a_2^\omega).
 $$
   So by Duhamel's principle, 
 $$
w^\omega(\tau)=e^{-  A_V\tau}w(0)+\int_0^\tau e^{-  A_V(\tau-t)}(R(a^\omega_1(t))-R(a^\omega_2(t)))dt.
 $$
 Here $| w(0)|_{s_*} \le \bar d$, and by  (1) of Assumption \ref{assume1} 
 $$
 |R(a_1^\omega(\tau)) - R(a^\omega_2(\tau))|_{s_*-1} \le C |w^\omega(\tau)|_{s_*}\, X^\omega(\tau), \quad X^\omega(\tau) =
 1 + |a^\omega_1(\tau)|_{s_*}^{ \bar m}  + |a^\omega_2(\tau)|_{s_*}^{ \bar m}.
 $$
 For $\theta>0$ the norm of the operator
 $
 e^{-  A_V\theta}: H^{s_*-1} \to H^{s_*}
 $
 is bounded by $\chi(\theta)$, where 
 $$
 \chi(\theta)
 =\begin{cases}
  C\theta^{-1/2}, & 0<\theta \le1,\\
  C e^{-c \theta},&\theta > 1,
  \end{cases}
  $$
 with  some $C,c >0$. So if
 $
 \max_{0\le t \le \tau} X^\omega(t) \le K, 
 $
then  
 $$
 | w^\om(\tau)|_{s_*} \le \bar de^{-\lambda_1 \tau}  +KC \int_0^\tau \chi(\tau-l) | w^\om(l)|_{s_*}dl.
 $$
 Applying Gronwall's lemma we derive from here that 
 \be\label{new_est}
 | w^\om(\tau)|_{s_*} \le \bar d (1+ CK e^{c_1 K} ), \quad \forall\, \tau\ge 0.
 \ee 
 for some positive constants $C,c_1$. 
 
  Denote  $ Y(T) = \sup_{0\le t\le T } |X^\omega(t)|$.  By \eqref{up-1}, $$\mathbf{E}Y(T)\leqslant 2(1+C'_{s_*}(M))(T+1).$$  
   For $K>0$ let  $ \Omega_K(T)$  be the event  $ \{ Y(T) \ge K\}$. Then 
 $
 \PP (\Omega_K(T)) \le 2(1+C'_{s_*}(M)) (T+1) K^{-1},
 $
 and
 $| a_1(\tau ) - a_2(\tau ) |_{s_*} = | w^\om(\tau)|_{s_*}$ satisfies \eqref{new_est}   for $\omega\notin \Omega_K(T)$.
  From here  we see that if $f\in C_b(h^{s_*})$ is such that $|f|_{C_b(h^{s_*})}\le1$ and Lip$\,f\le1$, then  
 \be\label{split2}
 \begin{split} 
 \EE \big(  f(a_1(\tau)) &-  f(a_2(\tau) ) \big) \le 2 \PP(\Omega_K(\tau )) + \bar d  (1+ CK e^{c_1K})
 \\
  &
 \leqslant 4(1+C'_{s_*}(M)) (\tau +1) K^{-1} + \bar d  (1+ CK e^{c_1K}) 
 \quad \forall\, K>0.
 \end{split} 
 \ee
 Let us denote by $g^1_M(\tau ,\bar d)$ the function in the r.h.s. above  with $K= \ln \ln\big( \bar d^{-1} \vee 3 \big)$.  
 This is a continuous function of $(M,\tau , \bar d) \in \R_+^3$, 
 vanishing when $\bar d=0$. Due to \eqref{mix-b1}
 and  \eqref{split2}, 
 \be\label{split3}
 \begin{split}
 \| \cD(a(\tau;v^1)) - &\cD(a(\tau;v^2)) \|_L^* =
 \| \cD(a_1(\tau)) - \cD(a_2(\tau)) \|_L^* \\
 &\le (2{\frak g}_M(\tau) )\wedge g^1_M(\tau,\bar d) \wedge 2 =: g_M^2(\tau,\bar d) ,
  \end{split}
 \ee
 if $ v_1, v_2 \in \bar B_M(h^{s_*})$ and $  |v^1-v^2|_{s_*} \le \bar d$, for any $M, \bar d>0$.  
 The function $g^2$ is continuous in the variables $(M,\tau, \bar d)$, 
 vanishes with $\bar d$ and goes to zero when $\tau \to\infty$ since $ \frak g_M(\tau)$ does. 
   \smallskip
 
 ii) At this step we consider a solution $a^0(\tau;\mu) =: a(\tau;\mu)$ of \eqref{eq-ef1} as in the lemma and examine the l.h.s. of \eqref{mix-b2} as a function of $\tau$. 
  For any $K>0$ consider   the conditional probabilities 
$
\mu_K =  \mu(\cdot  \mid  \bar B_K(h^{s_*}))$ and \mbox{$ \bar\mu_K =\mu(\cdot \mid h^{s_*}\setminus \bar B_K(h^{s_*}))$}.
  Then 
  $
  \mu = A_K \mu_K + \bar A_K \bar\mu_K, 
  $
   where  $A_K= \mu(\bar B_K(h^{s_*}))$ and $ \bar A_K = \PP\{ |a(0)|_{s_*} >K\}
 \leqslant M/K^{2\bar m}$  as $\EE |a(0)|_{s_*}^{2\bar m} \le M$. So 
 \begin{equation}\label{split4}
 \cD(a(\tau,\mu))=A_K\cD (a(\tau;\mu_K)) +\bar A_K\cD (a(\tau;\bar\mu_K)).
 \end{equation} 
 In view of  Assumption \ref{assume2},
   \[
 \begin{split}
  \|\cD (a(\tau;\mu_K)) -\mu^0\|_L^* & =   \| \int\big[ \cD (a(\tau; v)) \big] \mu_K(dv)   -\mu^0\|_L^*\\
  &  \le \int \| \cD (a(\tau; v)) -\mu^0\|_L^* \mu_K(dv) \le
   \frak g_K(\tau).
 \end{split}
 \]
 Therefore due to \eqref{split4},
  \[ \begin{split}
 &\|\cD(a(\tau,\mu))-\mu^0\|_L^* \leqslant 
  A_K   \|\cD(a(\tau,\mu_K))-\mu^0\|_L^* + \bar A_K   \|\cD(a(\tau,\bar\mu_K))-\mu^0\|_L^* \\
 & \le     \|\cD(a(\tau,\mu_K))-\mu^0\|_L^* + 2\bar A_K   \le
  \frak g_K(\tau) + 2
  \frac{M}{K^{2\bar m}} \quad\text{for $ K>0$ and  $\tau\ge 0 $. }
  \end{split}
  \]  
 Let $K_1(\tau)>0$ be a continuous non-decreasing function such that $K_1(\tau)\to\infty$ and $\frak g_{K_1(\tau)}(\tau)\to0$ as $\tau\to\infty$
 (it exists since $\frak g_K(\tau)$ is a continuous function of $(K,\tau)$, going to 0 as $\tau\to\infty$  for each fixed $K$). 
   Then choosing in the estimate above $K=K_1$ we get 
 \begin{equation}\label{y2}
 \|\mathcal{D}(a(\tau;\mu))-\mu^0\|_L^*\leqslant  \frak g_{K_1(\tau)}(\tau)+   \frac{2M}{K_1(\tau)^{2\bar m}}=:
 {\hat {\frak g}}_{M}(\tau).
 \end{equation}
 Obviously $ {\hat{\frak g}}_{M}(\tau)\ge 0$ is a continuous function on $\R_+^2$ ,  converging to $0$ as $\tau\to\infty$.

     \smallskip
 iii)   
 Now we examine  the l.h.s. of \eqref{mix-b2} as a function of $\tau$ and $d$.   Recall that the Kantorovich distance between measures 
 $\nu_1, \nu_2$ on $h^{s_*}$ is 
 $$
 \| \nu_1 -\nu_2\|_K = \sup_{\text{Lip}\, f\le1} \lan f, \nu_1\ran - \lan f,\nu_2\ran \le \infty. 
 $$
 Obviously $  \| \nu_1 -\nu_2\|_L^* \le  \| \nu_1 -\nu_2\|_K$. Since the $2\bar m$-th moments of $\mu$ and $\mu^0$ are bounded by $M\vee C_{0}$ by 
  \eqref{mu0-b1}  and  the assumption on  $\mu$  and since $\| \mu-\mu^0\|_L^* \le d$, then
 \be\label{x7}
 \| \mu- \mu^0\|_K \le \tilde{C} (M\vee C_{0}) ^{\gamma_1} d^{\gamma_2}:=\tilde d, \quad \gamma_1 = \tfrac{1}{2\bar m} , \; 
 \gamma_2 = \tfrac{2\bar m-1}{2\bar m} ;
 \ee
   see  \cite[Section 11.4]{BK} and    \cite[Chapter 7]{Vil}.  By the Kantorovich--Rubinstein theorem (see \cite{Vil, BK})
  there exist r.v.'s $\xi$ and $\xi_0$, defined 
 on a new probability space $(\Omega', \cF', \PP')$,  such that $\cD( \xi )= \mu$,  $\cD( \xi_0) = \mu^0$ and 
 \be\label{x77}
 \EE\,  |\xi_1 -\xi_0|_{s_*} = \| \mu- \mu^0\|_K.
 \ee
 Then using \eqref{split3} and denoting by $ a_{st}(\tau)$ a stationary solution of \eqref{eq-ef1},  $\cD (a_{st}(\tau))\equiv \mu^0$, 
   we have:
 \[
 \begin{split} 
 \| \cD (a(\tau)) - \mu^0\|_L^* =  \| \cD a(\tau; \mu^0) - \cD (a_{st}(\tau))  \|_L^* \le
 \EE^{\om'} \| \cD (a(\tau; \xi^{\om'})) - \cD (a(\tau; \xi_0^{\om'} ))  \|_L^* \\
 \le  \EE^{\om'} g^2_{\bar M}(\tau, |\xi^{\om'} -\xi_0^{\om'}|_{s_*}), \qquad {\bar M}={\bar M}^{\om'}=|\xi^{\om'}|_{s_*} \vee  |\xi_0^{\om'}|_{s_*} .
  \end{split} 
 \]
As $\EE ^{\om'} {\bar M}^{2\bar m} \le 2 (M\vee C_{0})$, then for any $K>0$, 
$$
\PP^{\om'} (Q'_K) \le  2K^{-2\bar m} (M\vee C_{0}), \qquad Q'_K =\{ \bar M\ge K\} \subset \Omega'.
$$
Since $g^2 \le 2$ and for $\om' \notin Q'_K$ we have $ |\xi^{\om'}|_{s_*} , |\xi_0^{\om'}|_{s_*} \le K$,  then 
 $$
 \| \cD (a(\tau)) - \mu^0\|_L^*  \le 4K^{-2\bar m} (M\vee C_{0}) + \EE ^{\om'} g_K^2(\tau,  |\xi^{\om'} -\xi_0^{\om'}|_{s_*}) \quad \forall\, K>0.
 $$

 For an $r>0$    let us denote  $\Omega'_r =\{  |\xi^{\om'} -\xi_0^{\om'}|_{s_*} \ge r\}$. Then by \eqref{x77} and 
 \eqref{x7},  $\PP^{\om'} \Omega'_r\le \tilde d r^{-1}$. So
 \be\label{x8}
 \| \cD (a(\tau)) - \mu^0\|_L^*  \le 4K^{-2\bar m} (M\vee C_{0}) +  2\tilde d r^{-1} + g^2_K(\tau,r), \qquad \text{  for any $K, r>0$. }
  \ee
  Let $g_0(l)$ be a positive continuous function on $\mathbb{R}_+$ such that $g_0(l)\to\infty$ as
   $l\to+\infty$ in such a way that 
    $|C'_{s_*}\big(g_0(l)\big)(\ln\ln l)^{-1/2}|\leqslant 2C'_{s_*}(0)$ for $l\geqslant3$.
  With $r=\tilde d^{1/2}$ and $K=g_0(r^{-1})$, we denote the  r.h.s of \eqref{x8} 
  as $g^3_M(\tau,r)$ (so we substitute in \eqref{x8} 
   $\tilde d=r^2$ and $K=g_0(r^{-1})$). 
 By \eqref{x8} and  the definition of $g^2$ (see  \eqref{split3}), we have 
 \[ \begin{split}
 & \| \cD (a(\tau)) - \mu^0\|_L^* \le
 g_{M}^3(\tau,r) 
 \leqslant 4(g_0(r^{-1}))^{-2\bar m}(M\vee C_{0})+2r \\
 &+4(\tau+1)\big(1+C'_{s_*}\big(g_0(r^{-1})\big)\big)
 \big(\ln\ln(r^{-1}\vee 3)\big)^{-1}
 +rC\ln\ln(r^{-1}\vee3)\exp\big(c_1\ln\ln (r^{-1}\vee3)\big).
 \end{split}
 \]
 As $r\to0$ the  second  and  fourth terms converge to  zero. By the choice of $g_0$, the first term clearly converges to zero with $r$,   so does the third term, which  
  is   $\leqslant 8(1+\tau)(1+C'_{s_*}(0))(\ln\ln(r^{-1}))^{-1/2}$ for $r\leqslant\frac{1}{3}$.   
Hence  $g_M^3(\tau,r)$ defines a continuous function on $\mathbb{R}_+^3$, vanishing with $r$.  Using 
\eqref{x7} let us write $r=\tilde d^{1/2}$ as $r=R_{M}(d)$, where $R$ is a continuous function $\R_+^2 \to \R_+$,  non-decreasing in $d$ 
and vanishing with $d$. Setting 
$
g^4_M(\tau, d) = g^3_M(\tau, R_{M}(d \wedge 2))
$
and using that $\| \mu -\mu^0\|_L^* \le2$, we get from the above that 
$$
 \| \cD (a(\tau)) - \mu^0\|_L^* \le g^4_M(\tau, \| \mu -\mu^0\|_L^* ). 
$$
Finally, evoking \eqref{y2} we arrive at \eqref{mix-b2} with $g_M=g_M^5$, where 
$$
g^5_M(\tau, d) = g^4_M(\tau, d) \wedge \hat{\frak{ g}}_{M}(\tau) \wedge 2, \quad 0\le d \le2. 
$$
The function $g^5$ is continuous, vanishes with $d$ and converges to zero as $\tau\to\infty$. For any fixed $M>0$  this convergence 
is uniform in $d$ due to the term  $\hat{\frak{ g}}_{M}(\tau)$. So for a fixed $M>0$ the function $(\tau, d) \mapsto g^5_M(\tau, d) $ extends to a 
continuous function on the compact set $[0, \infty] \times [0,2]$ (where it vanishes when $\tau=\infty$). 
Thus $g^5_M$ is uniformly continuous in $d$, and the lemma is proved.

\section{NLW: the setting and  result}\label{s_nlw}

In this section we briefly discuss  stochastic nonlinear wave (NLW) equations. Following 
 \cite{m2014, mn2018} we consider the following equations on a smooth bounded domain $\frak{D}\subset \mathbb{R}^3$:
\begin{equation}\label{nlw1}
\partial_t^2u+\gamma \partial_t u-\Delta u=-\gamma f(u)+\gamma h(x)+\sqrt{\gamma}\,\eta(t,x), \quad x\in \frak D, 
\end{equation}
supplemented with the Dirichlet boundary condition on $\p\frak D$. Here 
 $\gamma\in(0,1]$ is a small parameter, $h(x)$ is a function in $H^1_0(\frak D;\mathbb{R})$ and the nonlinearity $f$ is $C^2$-smooth. 
 The random  force $\eta(t,x)$ is a white noise of time of the form 
\[\eta(t,x)=\frac{\partial}{\partial t}\sum_{j=1}^\infty b_j\beta^R_j(t)\frak e_j(x).\] 
Here $\{\beta^R_j(t),j\geqslant1\}$ is  a sequence of independent standard real Brownian motions, $\{\frak e_j(x),j\geqslant1\}$ is an orthonormal basis in $L^2(\frak D;\mathbb{R})$ composed of  eigenfunctions of the  Laplacian operator; that is  $-\Delta\frak e_j=\lambda_j\frak e_j$ with $0<\lambda_1\leqslant\lambda_2\leqslant\dots$.  The set $\{b_j,j\geqslant1\}$ is a sequence of positive real numbers, satisfying 
\be\label{B1}
\mathcal{B}:=\sum_{j\ge1} \lambda_j b_j^2<+\infty.
\ee
The nonlinear term $f$   meets the following growth condition,
\begin{equation}\label{f-g1}
|f''(u)|\leqslant C(|u|^{\rho-1}+1),\; \;\;u\in\mathbb{R},
\end{equation}
where $C$ and $\rho<2$ are positive constants,  as well as  the dissipativity conditions
\begin{equation}\label{f-d1}
F(u)\geqslant -\kappa u^2-C,\;\;\; u\in\mathbb{R},
\end{equation}
\begin{equation}\label{f-d2}
f(u)u-F(u)\geqslant -\kappa u^2-C,\; u\in\mathbb{R},
\end{equation}
where $F$ is the primitive of $f$ and  $\kappa$ is a positive constant.
\begin{lemma} \label{smoothing1}(\cite[Lemma 4.5]{m2014}). Under the condition \eqref{f-g1} we have 
\[\|f(u)-f(v)\|_0^2\leqslant C(\|u\|^{2\rho}_{1-s_\rho}+\|v\|^{2\rho}_{1-s_\rho}+1)\|u-v\|_{1-s_\rho}^2,\quad s_\rho=\tfrac{2-\rho}{2(\rho+1)}.
\]
\end{lemma}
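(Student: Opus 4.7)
The plan is to pass from the pointwise growth hypothesis on $f''$ to a pointwise Lipschitz-type bound on $f$, and then combine Hölder's inequality with a Sobolev embedding whose critical index is precisely $1-s_\rho$.

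First, I would integrate the assumption $|f''(u)|\le C(|u|^{\rho-1}+1)$ once to deduce $|f'(u)|\le C'(|u|^{\rho}+1)$ for all $u\in\R$ (using $\int_0^{|u|} t^{\rho-1}dt=|u|^{\rho}/\rho$ for $\rho>0$), and then apply the mean value theorem to obtain the pointwise estimate
\[
|f(u)-f(v)|\le C''\bigl(|u|^{\rho}+|v|^{\rho}+1\bigr)|u-v|.
\]
Squaring and integrating over $\frak D$ gives
\[
\|f(u)-f(v)\|_0^2\le C'''\int_{\frak D}\bigl(|u|^{2\rho}+|v|^{2\rho}+1\bigr)|u-v|^2\,dx.
\]

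Second, I would estimate each summand by Hölder's inequality with the conjugate pair $p=(\rho+1)/\rho$ and $q=\rho+1$; this yields
\[
\int_{\frak D}|u|^{2\rho}|u-v|^2\,dx\le \|u\|_{L^{2(\rho+1)}}^{2\rho}\,\|u-v\|_{L^{2(\rho+1)}}^{2},
\]
with an identical bound for the $|v|^{2\rho}$ term and a trivial one for the constant term.

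Third, I would invoke the Sobolev embedding in dimension $D=3$, namely $H^s(\frak D)\hookrightarrow L^p(\frak D)$ with $\tfrac1p=\tfrac12-\tfrac s3$ (using that $\frak D\subset\R^3$ and the Dirichlet boundary condition, so $H^{1-s_\rho}$ here is shorthand for $H^{1-s_\rho}_0$). Imposing $p=2(\rho+1)$ solves to $s=1-s_\rho$ with $s_\rho=(2-\rho)/(2(\rho+1))$, exactly matching the lemma. The condition $\rho<2$ guarantees $s_\rho>0$, hence $1-s_\rho<1$ and $2(\rho+1)<6$, so the embedding is well within the subcritical range and
\[
\|u\|_{L^{2(\rho+1)}}\le C\|u\|_{1-s_\rho}.
\]
Combining the three steps yields the claimed inequality.

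The main obstacle is purely bookkeeping: one must choose the Hölder exponents so that after applying the Sobolev embedding all three factors $u$, $v$, $u-v$ are measured in one and the same norm $\|\cdot\|_{1-s_\rho}$, and this constraint is what pins down the particular value $s_\rho=(2-\rho)/(2(\rho+1))$. No delicate analysis beyond this matching calculation is required.
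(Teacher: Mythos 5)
The paper does not prove this lemma at all --- it imports it verbatim from \cite[Lemma~4.5]{m2014} --- and your argument is exactly the standard proof behind that citation: integrate \eqref{f-g1} to get $|f'(u)|\le C(|u|^{\rho}+1)$, apply the mean value theorem pointwise, use H\"older with the conjugate exponents $\tfrac{\rho+1}{\rho}$ and $\rho+1$, and close with the three-dimensional embedding $H^{1-s_\rho}\hookrightarrow L^{2(\rho+1)}$, whose index equation $\tfrac{1}{2(\rho+1)}=\tfrac12-\tfrac{1-s_\rho}{3}$ you solve correctly (and $\rho<2$ indeed keeps $2(\rho+1)<6$ subcritical). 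The one caveat is that your first step --- and in fact the lemma itself, as a scaling test with $f(u)=u^2$ shows --- implicitly requires $\rho\ge1$, since for $0<\rho<1$ the linear term produced by integrating the constant in \eqref{f-g1} is not controlled by $|u|^{\rho}+1$; this is a restriction inherited from the statement in \cite{m2014}, not a defect of your argument.
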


Let us denote $\partial_tu=Lv$, where $L=(-\Delta)^{1/2}$, and set $\xi=u+iv$. Then in terms of $\xi$  equation \eqref{nlw1} reads
\[\partial_t\xi=iL\xi-\gamma i\Im \xi-\gamma iL^{-1}f(\Re\xi)-\gamma iL^{-1}\eta(t,x).\]
Introducing the slow time $\tau=\gamma t$, we have 
\begin{equation}\label{nlw2}
\partial_\tau\xi=\gamma^{-1}iL\xi-i\Im\xi-iL^{-1}f(\Re\xi)-i\tilde\eta(\tau,x),
\end{equation}
where $\tilde\eta(\tau,x)=\sum_{j=1}^\infty \tilde b_j  \dot \beta^R_j(\tau)\frak e_j(x)$ with $\tilde b_j= b_j\lambda_j^{-1/2}$, $j\geqslant1$. 

The NLW equation  \eqref{nlw1} (without the $\gamma$-factor in the r.h.s) was studied in \cite{m2014,mn2018}, where the global well-posedness and
estimates for the  norms   of  solutions  are established in the Sobolev space $H^s\times H^{s-1}= \{(u, \dot u)\}$,\  $s\in [1,2-\frac{\rho}{2}]$.  A simple analysis of the proof of  \cite[Proposition 3.4]{m2014} or \cite[Proposition 5.4]{mn2018} implies 
 the following statement on the global well-posedness  of eq. \eqref{nlw2}:
 
\begin{theorem}\label{nlw-apriori} Assume  conditions \eqref{f-g1}, \eqref{f-d1} and \eqref{f-d2}. Then for any $s\in[1,2-\frac{\rho}{2})$ and $\xi_0\in H^s(\frak D;\mathbb{C})$,  equation \eqref{nlw2} has a unique strong solution $\xi^\gamma(\tau;\xi_0)$, equal to $\xi_0$ at $\tau=0$ and defined for  $\tau\geqslant0$.
Uniformly in $\gamma\in(0,1]$ this solution satisfies 
\begin{equation}\label{nlw-ap1}\mathbf{E}\sup_{\tau\in[\theta,\theta+1]}\|\xi^\gamma (\tau;\xi_0)\|_s^{2m}\leqslant C_s(m,\|\xi_0\|_s,\mathcal{B}),\;\;\; \forall \theta\geqslant0,\;m\in\mathbb{N},\end{equation}
where $C_s$ is a continuous function, non-decreasing in all its arguments. 
\end{theorem}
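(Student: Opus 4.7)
The strategy is to transfer the well-posedness and moment bounds for \eqref{nlw1} in fast time, as established in \cite[Proposition~3.4]{m2014} and \cite[Proposition~5.4]{mn2018}, through the rescaling $\tau=\gamma t$, checking at each step that no hidden $\gamma$-dependent factor survives. The key observation justifying this is that in the slow-time equation \eqref{nlw2} the parameter $\gamma$ appears only in the linear term $\gamma^{-1}iL\xi$: this term is skew-adjoint on every $H^s$, so the associated propagator $U_\gamma(\tau):=e^{i\gamma^{-1}L\tau}$ is isometric on $H^s$ uniformly in $\gamma$ and $\tau$, and contributes $0$ to any energy identity.

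I would first establish local well-posedness via the mild formulation
\[
\xi(\tau)=U_\gamma(\tau)\xi_0+\int_0^\tau U_\gamma(\tau-\sigma)\bigl[-i\Im\xi(\sigma)-iL^{-1}f(\Re\xi(\sigma))\bigr]d\sigma-i\int_0^\tau U_\gamma(\tau-\sigma)\,d\tilde\eta(\sigma).
\]
The restriction $s<2-\rho/2=1+s_\rho$ together with Lemma~\ref{smoothing1} shows that the drift $\xi\mapsto-i\Im\xi-iL^{-1}f(\Re\xi)$ is locally Lipschitz $H^s\to H^s$ with Lipschitz constant polynomial in $\|\xi\|_s$. The stochastic convolution is an $H^s$-valued continuous Gaussian process whose second moments reduce, by the isometry of $U_\gamma$, to $\sum_j\tilde b_j^2\lambda_j^s=\sum_j b_j^2\lambda_j^{s-1}\le\mathcal{B}$ (valid for $s\le 2$ by \eqref{B1}) and hence are bounded by a constant depending only on $\mathcal{B}$. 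A standard Banach fixed-point on a short random time interval yields a unique local strong solution up to a maximal time of existence.

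To obtain \eqref{nlw-ap1} and extend globally, I adapt the Lyapunov construction of \cite{m2014}. For the base case $s=1$, $m=1$, apply It\^o's formula to a functional $\mathcal{F}$ equivalent to the physical wave energy
\[
\mathcal{E}(\xi):=\tfrac12\|\xi\|_1^2+\int_{\frak D}F(\Re\xi)\,dx,
\]
designed (as is standard for damped wave equations) so that the damping term $-i\Im\xi$ produces exponential decay of $\mathcal{F}$ rather than mere dissipation of a part of the energy. The skew-adjoint term $\gamma^{-1}iL\xi$ drops out of $d\mathcal{F}$ entirely; the dissipativity conditions \eqref{f-d1}--\eqref{f-d2} make $\mathcal{F}$ coercive and yield an It\^o inequality $d\mathcal{F}\le-c\,\mathcal{F}\,d\tau+C(1+\mathcal{B})\,d\tau+dM_\tau$ with $c,C$ independent of $\gamma$. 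Gronwall's inequality combined with Doob's maximal inequality converts this into the uniform-in-$\gamma$ bound on $\mathbf{E}\sup_{\tau\in[\theta,\theta+1]}\|\xi(\tau)\|_1^2$. For $s\in(1,2-\rho/2)$ one iterates with a higher-regularity functional built on $\|L^s\xi\|_0^2$, using Lemma~\ref{smoothing1} to absorb the nonlinear contribution; higher moments $m\ge2$ then follow from the standard exponential supermartingale technique recalled in Remark~\ref{r_2.4}.4.

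The principal obstacle is bookkeeping rather than conceptual: one must verify that no constant secretly carries a $\gamma^{-1}$, particularly in the stochastic convolution estimate and in the higher-order energy identity where it is tempting to commute derivatives through $U_\gamma$. Because $U_\gamma$ is unitary on every Sobolev space, the Burkholder--Davis--Gundy and deterministic convolution estimates never produce such a factor; the rapid oscillation $e^{i\gamma^{-1}L\tau}$ inside the integrals is neutralized the moment norms are taken. Once this is confirmed, continuation from the local to a global solution, and the passage to higher moments, are routine.
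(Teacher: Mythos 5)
Your strategy coincides with the paper's own: the authors give no self-contained proof of Theorem~\ref{nlw-apriori} but obtain it by ``a simple analysis'' of the proofs of \cite[Proposition~3.4]{m2014} and \cite[Proposition~5.4]{mn2018} --- that is, precisely by observing that after the rescaling $\tau=\gamma t$ the parameter $\gamma$ survives only in the skew-adjoint term $\gamma^{-1}iL\xi$, which is unitary on every $H^s$ and drops out of every energy identity, so the Lyapunov estimates of those papers carry over uniformly in $\gamma$. Your elaboration is sound in outline but contains one concrete slip: the identity $2-\rho/2=1+s_\rho$ is false (for $\rho=1$ one has $2-\rho/2=3/2$ while $1+s_\rho=5/4$; the two agree only at $\rho=0$ and $\rho=2$), and, relatedly, Lemma~\ref{smoothing1} controls only $\|f(u)-f(v)\|_0$, hence gives local Lipschitz continuity of $\xi\mapsto L^{-1}f(\Re\xi)$ from $H^s$ into $H^1$ but \emph{not} into $H^s$ when $s>1$; since $U_\gamma$ is unitary rather than smoothing, your fixed-point argument on $H^s$ for $s\in(1,2-\rho/2)$ genuinely needs a higher-order nonlinear estimate (a bound on $\|f(u)-f(v)\|_{s-1}$ of Moser type), which is supplied in \cite{m2014,mn2018} but not by Lemma~\ref{smoothing1} alone. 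A second, minor point: $\sum_j b_j^2\lambda_j^{s-1}\le\mathcal{B}$ requires $\lambda_j\ge1$, so for small eigenvalues the constant should also depend on $\lambda_1$. Both issues are matters of importing the right estimates from the cited propositions rather than of method; the remainder of your plan (uniform unitarity of $U_\gamma$, the damped-wave Lyapunov functional with \eqref{f-d1}--\eqref{f-d2} giving coercivity, exponential supermartingales for the higher moments) is exactly what the paper's citation-based proof relies on.
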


Now let us write eq. \eqref{nlw2} in terms of  Fourier coefficients with respect to the basis 
$\{\frak e_j(x)\}$: $\xi=\sum_{j\geqslant1}v_j\frak e_j$. As in Section \ref{s_2}, we use $h^s$ to denote the space of sequences  of complex 
 Fourier coefficients and denote by   $\Psi$ 
 the map  $H^s(\frak D;\mathbb{C})\to h^s$, $\xi\mapsto v:=(v_j,j\geqslant1)$. Then we have
\begin{equation}\dot v_k=\gamma^{-1}i\lambda_k^{1/2}v_k-i\Im v_k+i\hat P_k(v)+i\tilde b_k\beta_k^R(\tau),\quad  k\in\mathbb{N},
\end{equation}
where $\hat P(v)=(\hat P_k(v),k\in \mathbb{N})=\Psi(L^{-1}f(\Re\xi))$ with $\xi=\Psi^{-1}v$. Passing to  the interaction representation
\[
a_k(\tau)=e^{-i\gamma^{-1}\lambda_k^{1/2}\tau}v_k(\tau),\quad k\geqslant1,
\]
 we obtain the following system of  equations for the $a$-variables
\begin{equation}\label{nlw-a1}\dot a_k(\tau)=ie^{-i\gamma^{-1}\lambda_k^{1/2}\tau}\Big(-\Im\big(e^{i\gamma^{-1}\lambda_k^{1/2}\tau} a_k\big)+\hat P_k(\Phi_{\gamma^{-1}\hat\Lambda\tau}a)+ \tilde b_k\dot\beta_k^R\Big),\;\;  \; k \in\mathbb{N},
\end{equation}
where $\hat\Lambda=(\lambda_k^{1/2},k\geqslant1)$ and operator $\Phi$ is defined  in \eqref{Phi-d}. Let us calculate an effective equation for \eqref{nlw-a1},
following \cite{HKM}. 

i) To calculate the effective diffusion we decomplexify  components $a_k = a_k^R +i a_k^I$  of the $a$-vector 
as $(a_k^R +a_k^I) \in \R^2$ and write the dispersion matrix of the 
equation as a block-diagonal real matrix with the blocks 
$
\tilde b_k 
\begin{pmatrix}
\cos\phi_k & -\sin\phi_k \\
0 & 0
\end{pmatrix},
$
where
$ \phi_k = \phi^k(\tau) =
 \gamma^{-1} \lambda_k^{1/2} \tau$ for $ k=1,2,\dots .
$
So the diffusion matrix of eq.~\eqref{nlw-a1} -- let us call it 
 $A(\tau)$ --  is formed by  blocks 
$
\tilde b_k^2 \begin{pmatrix}
\cos^2\phi_k & - \cos\phi_k\sin\phi_k \\
- \cos\phi_k\sin\phi_k & \sin^2\phi_k
\end{pmatrix}$. 
Consider the limit
$
\cA(\tau) = \lim_{\gamma\to0} \frac1{\tau} \int_0^\tau A(l)\,dl. 
$
This is a $\tau$--independent block-diagonal real matrix with the blocks $\tfrac12 \tilde b_k^2\, \text{id}$, and this 
 is the diffusion matrix of the effective equation. Corresponding 
dispersion matrix has  blocks $\tfrac1{\sqrt2} \tilde b_k\,\text{id}$. Coming back to the complex coordinates we see that the noise in the effective equation is 
$
\tfrac1{\sqrt2}  \sum \tilde b_k\, \dot{ \tilde\beta}_k(\tau), 
$ 
where $\{\tilde\beta_k(\tau)\}$ are standard independent complex Brownian motions.

ii) The drift in the effective equation is a sum of two terms. The second one is
\[
\tilde{R}({a})=(\tilde R_k({a}),k\geqslant1):=\lim_{T\to\infty}\frac{1}{T}\int_0^{T}\Phi_{-\hat\Lambda t}\hat P(\Phi_{\hat\Lambda t} {a} )dt
\]
(cf.  \eqref{limit-eq}). 
The $k$-th component of  the first term is 
$$
\lim_{T\to\infty}\frac{1}{T}\int_0^Te^{-i\lambda_k^{1/2}t}\big(-i\Im(e^{i\lambda_k^{1/2}t}{a}_k)\big)dt=-i \tfrac{1}2(\Im {a}_k-i\Re {a}_k)=- \tfrac{1}2{a}_k.
$$

iii) So the effective equation for  \eqref{nlw-a1} reads 
\begin{equation}\label{nlw-ef}
\dot{a}_k(\tau)=-\tfrac{1}{2}{a}_k+i\tilde R_k({a})+\tfrac{1}{\sqrt{2}}\tilde b_k d\tilde\beta_k,\qquad k\in\mathbb{N}.
\end{equation}
\smallskip

By Lemma \ref{smoothing1} and \cite[Lemma 4]{HKM}, we have 
\begin{equation}\label{smoothing-ef}
|\tilde R(a_1)-\tilde R(a_2)|_1\leqslant C\big(|a_1|_{1-s_\rho}^\rho\wedge|a_2|_{1-s_\rho}^\rho+1\big)|a_1-a_2|_{1-s_\rho},\;\; \;a_1,a_2\in h^1.
\end{equation}
In particular, the mapping
 ${a}\mapsto\tilde R({a})$ belongs to $\text{Lip}_{\rho+1}(h^s,h^s)$, $s\in[s_\rho,1]$. 
Therefore  effective equation \eqref{nlw-ef} is at least locally well-posed in $h^s$, $s\in[s_\rho,1]$.
Let us denote $a^0(\tau;a_0)$ its solution with an initial condition $a^0(0;a_0)=a_0$. Similarly let
 $a^\gamma (\tau;v_0)$ be a solution  of eq. \eqref{nlw-a1}, equal  $a_0$ at $\tau=0$. 
 Then using \eqref{nlw-ap1} and arguing as in  \cite{HKM} we get
 
 \begin{theorem}\label{nlw-average1} 
 Under the assumptions of Theorem \ref{nlw-apriori} we have:
  
  i) for any ${a}_0\in h^{1}$, eq. \eqref{nlw-ef} has a unique strong solution $a^0(\tau;{a}_0)$, $\tau\geqslant0$.
   It belongs to $C([0,\infty), h^1)$ a.s., and for any $\theta\geqslant0$ satisfies 
  \begin{equation}\label{nlw-ef-ap}
  \mathbf\sup_{\tau\in[\theta,\theta+1]}|a^0(\tau;{a}_0)|_1^{2m}\leqslant C(m,|{a}|_1, \mathcal{B}),\; \;\; \forall m\in\mathbb{N},
  \end{equation}   
  where $C$ is a continuous function, increasing in all its arguments.
  
  ii)  For any $s>1$, ${a}_0\in h^s$ and  $T>0$ we have 
  \be\label{nlw-conv}
  \cD \big(a^\gamma (\tau; {a}_0)\! \mid_{[0,T]} \!\big)  \strela \cD \big(a^0 (\tau; {a}_0)\! \mid_{[0,T]} \!\big) \quad \text{in}\;\;\mathcal{P}\big(C([0,T]); h^1)\big)\;\;
  \text{as} \; \;\gamma \to0. 
  \ee
  Moreover, for any $M>0$  the above convergence is uniform for ${a}_0\in \bar B_M(h^s)$. 
   \end{theorem}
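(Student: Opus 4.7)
My plan is to establish part (ii) first and then deduce part (i) as a consequence, exactly following the strategy of Remark \ref{r_2.4}.1).

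For part (ii), I would follow the scheme of \cite{HKM}. The a priori bound \eqref{nlw-ap1} with $s>1$ gives uniform in $\gamma\in(0,1]$ bounds on $\EE\sup_{[0,T]}|a^\gamma(\tau;a_0)|_s^{2m}$, since the interaction representation is an isometry in every $h^\sigma$. Combined with eq.~\eqref{nlw-a1} and the polynomial bound on $\hat P$ coming from \eqref{smoothing-ef}, this yields uniform in $\gamma$ H\"older-type estimates for $a^\gamma$ in a slightly weaker norm, e.g.\ $h^{1-\delta}$. The compact embedding $h^s\hookrightarrow h^1$ (valid since $s>1$) then gives tightness of the laws $\{\cD(a^\gamma(\cdot;a_0))\}$ in $\cP(C([0,T];h^1))$.

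Next, one must identify any weak limit point $\tilde a$ as a solution of \eqref{nlw-ef}. Rewriting \eqref{nlw-a1} in integral form and passing to the limit term by term: the oscillatory linear contribution $-\Im(e^{i\gamma^{-1}\lambda_k^{1/2}\tau}a_k)$ averages to $-\tfrac12 a_k$ by the direct calculation performed in the excerpt; the nonlinear drift averages to $\tilde R(a)$ via \cite[Lemma 4]{HKM} together with the smoothing bound \eqref{smoothing-ef}; and the stochastic integral converges to the noise of \eqref{nlw-ef} because its diffusion matrix, in the sense of time-averages, converges to the block-diagonal effective diffusion matrix already computed in the excerpt. Uniqueness for \eqref{nlw-ef} in $h^1$ (provided by \eqref{smoothing-ef}) then implies that the whole family converges, giving \eqref{nlw-conv}. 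Since every estimate above depends on $a_0$ only through $|a_0|_s$, uniformity for $a_0\in\bar B_M(h^s)$ is automatic.

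For part (i), the Lipschitz bound \eqref{smoothing-ef} restricted to $h^1$ yields local existence and uniqueness of a strong solution in $h^1$ by a standard contraction argument. To obtain the global moment bound \eqref{nlw-ef-ap}, take first $a_0\in h^s$ with $s>1$ and apply part (ii). For any $N,M_0\in\Nn$ the map $a\mapsto N\wedge|\Pi_{M_0}a|_1^{2m}$ is continuous and bounded on $h^1$, so by convergence \eqref{nlw-conv} and estimate \eqref{nlw-ap1},
$$
\EE\sup_{\tau\in[\theta,\theta+1]}\big(N\wedge|\Pi_{M_0}a^0(\tau;a_0)|_1^{2m}\big)\le C(m,|a_0|_s,\mathcal B).
$$
Sending $N,M_0\to\infty$ and invoking Fatou's lemma gives \eqref{nlw-ef-ap} for $a_0\in h^s$; the bound then extends to all $a_0\in h^1$ by density and continuous dependence in $h^1$ on compact time intervals. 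This a priori bound rules out blow-up and promotes the local solution to a global one.

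The main obstacle I expect is the rigorous identification of the limiting stochastic integral: although the time-averaging of the diffusion matrix in the excerpt is an algebraically straightforward Khasminski-type calculation, turning it into a genuine martingale-problem convergence statement in the infinite-dimensional space $h^1$, uniformly in $\gamma$ and in $a_0\in\bar B_M(h^s)$, requires sharp tail control of the noise modes together with careful exploitation of \eqref{nlw-ap1} and \eqref{smoothing-ef}. Modulo this technical step, the argument is a direct transcription of the one used for \eqref{eq-a1} in \cite{HKM}.
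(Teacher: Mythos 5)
Your proposal follows essentially the same route as the paper, whose proof of this theorem consists of invoking the a priori bound \eqref{nlw-ap1} and "arguing as in \cite{HKM}" --- i.e.\ precisely the tightness/identification/uniqueness scheme you outline for (ii), combined with the Fatou-lemma device of Remark \ref{r_2.4}.1) for (i). One small repair in your part (i): to make the constant in \eqref{nlw-ef-ap} depend on $|a_0|_1$ rather than $|a_0|_s$ (which is what the density argument in $h^1$ actually requires), you should apply \eqref{nlw-ap1} with $s=1$ to the approximating solutions before passing to the limit, since the bound you displayed, involving $C(m,|a_0|_s,\mathcal B)$, does not survive approximation of a general $a_0\in h^1$ by elements of $h^s$.
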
  
   

Repeating the argument in Section \ref{s_3} we get the following  analogy to Theorem~\ref{thm-uniform}:

\begin{theorem}\label{thm-nlw}Assume in addition to the assumption in Theorem \ref{nlw-average1} that the effective equation \eqref{nlw-ef} 
is mixing in the space $h^1$  and that   the requirement  as in  Assumption~\ref{assume2} is met. 
Then for any $s>1$ and $v_0\in h^s$ we have the following convergence
\[\lim_{\gamma\to0}\sup_{\tau\geqslant0}\|\cD(a^\gamma (\tau;v_0))-\cD(a^0(\tau;v_0))\|_{L,h^1}^*=0.
\]
 Moreover, for any $M>0$ the above convergence is uniform for $v_0\in \bar B_{M}(h^{s})$. 
\end{theorem}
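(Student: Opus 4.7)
The plan is to follow, step by step, the proof of Theorem~\ref{thm-uniform} from Section~\ref{s_3}, with the space $h^{s_*}$ replaced by $h^1$, the space $h^{\bar s}$ replaced by $h^s$ with $s>1$, the a priori bound \eqref{up-1} replaced by \eqref{nlw-ef-ap}, and \eqref{assume-bound11} replaced by \eqref{nlw-ap1}. The uniform-in-time averaging statement \eqref{nlw-conv} of Theorem~\ref{nlw-average1} takes the place of Theorem~\ref{t_0}.(ii), and its uniformity for $v_0$ in bounded balls of $h^s$ plays the role of Remark~\ref{r_2.4}.2).

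First I would formulate and prove the analogue of Lemma~\ref{average-lm}: for any $\delta,T>0$ there is $\gamma_1=\gamma_1(\delta,T)$ so that for $\gamma\le\gamma_1$
\[
\sup_{\tau\in[0,T]}\|\cD(a^\gamma(T'+\tau))-\cD(a^0_{T'}(\tau))\|_{L,h^1}^*\le\delta/2,\qquad \forall T'\ge0,
\]
where $a^0_{T'}$ is the weak solution of \eqref{nlw-ef} started from $\cD(a^\gamma(T'))$. This is immediate from \eqref{nlw-conv} combined with \eqref{nlw-ap1} (which controls the law of the initial data $a^\gamma(T')$ uniformly in $T'$), via the same argument as in Lemma~\ref{average-lm}.

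The main work is in proving the analogue of Lemma~\ref{p_suff_cond}. I would copy the three-step proof of Subsection~\ref{ss_proof_prop} verbatim in the NLW setting, with one substantive change in step (i): for two deterministic data $v^1,v^2\in \bar B_M(h^1)$ with $|v^1-v^2|_1\le\bar d$, setting $w=a_1-a_2$ and using Duhamel's formula for \eqref{nlw-ef},
\[
w(\tau)=e^{-\tau/2}w(0)+i\int_0^\tau e^{-(\tau-t)/2}\bigl(\tilde R(a_1(t))-\tilde R(a_2(t))\bigr)\,dt,
\]
I would invoke \eqref{smoothing-ef} to bound
\[
|\tilde R(a_1(t))-\tilde R(a_2(t))|_1\le C\bigl(|a_1(t)|_1^\rho+|a_2(t)|_1^\rho+1\bigr)|w(t)|_1.
\]
Note that here the nonlinearity $\tilde R$ itself supplies the regularity gain that in the CGL proof was provided by the parabolic smoothing $e^{-A_V\theta}\colon H^{s_*-1}\to H^{s_*}$; the linear semigroup of \eqref{nlw-ef} contributes only the uniform exponential decay $e^{-\tau/2}$, which is enough since no loss of regularity has to be repaired. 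A Gronwall argument on the event $\{\sup_{t\le\tau}(|a_1(t)|_1^\rho+|a_2(t)|_1^\rho)\le K\}$ yields $|w(\tau)|_1\le \bar d\,(1+CKe^{c_1K})$ for all $\tau\ge 0$, which is the NLW counterpart of \eqref{new_est}. Steps (ii) and (iii) of Subsection~\ref{ss_proof_prop} then carry over word for word: the truncation by $\bar B_K(h^1)$ uses the $2\bar m$-th moment bound on $\mu$ in $h^1$ (with $\bar m$ any integer, in view of \eqref{nlw-ef-ap}), and the Kantorovich--Rubinstein coupling argument depends only on the abstract mixing assumption on \eqref{nlw-ef} together with the already-established distance estimate \eqref{split3} in its $h^1$ incarnation.

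With these two lemmas in hand, the analogue of Lemma~\ref{random-initial-average} is proved by the identical two-line computation, and the concluding five-step iteration (i)--(v) of the proof of Theorem~\ref{thm-uniform} transfers unchanged to give
\[
\lim_{\gamma\to0}\sup_{\tau\ge0}\|\cD(a^\gamma(\tau;v_0))-\cD(a^0(\tau;v_0))\|_{L,h^1}^*=0
\]
for every $v_0\in h^s$ ($s>1$), uniformly for $v_0\in\bar B_M(h^s)$. The only genuinely equation-specific point, and the place where one must think rather than cite, is verifying in step (i) of the NLW version of Lemma~\ref{p_suff_cond} that \eqref{smoothing-ef} gives a sufficient substitute for the parabolic smoothing used in Subsection~\ref{ss_proof_prop}; I expect this to be the main (but mild) obstacle, and \eqref{smoothing-ef} is designed to provide exactly the required gain.
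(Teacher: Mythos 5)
Your proposal is correct and follows essentially the same route as the paper: the paper's proof of Theorem \ref{thm-nlw} consists precisely of the instruction to repeat the argument of Section \ref{s_3}, which is what you carry out with the appropriate substitutions of spaces, a priori bounds and the averaging theorem. You also correctly isolate the only point requiring genuine adaptation --- replacing the parabolic smoothing $e^{-A_V\theta}\colon H^{s_*-1}\to H^{s_*}$ by the regularity gain \eqref{smoothing-ef} of $\tilde R$ combined with the plain exponential decay $e^{-\tau/2}$ of the linear semigroup --- which is exactly the substitution the paper's one-line proof implicitly relies on.
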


For the original equation \eqref{nlw1} we introduce the vector of action variables for  its solution $\frak u=(u, u_t)$ as $I(\frak u)=(I_1(\frak u),\dots)$,
 where $I_k(\frak u)=\frac{1}{2}|\langle u,\frak e_k\rangle|^2+\frac{1}{2}\lambda_k^{-1}|\langle  u_t, \frak e_k\rangle|^2$, $k=1,\dots$. 
  Then we have the following corollary in analogy to Corollary \ref{coroll-cgl}:
  
\begin{corollary}Under the assumption of Theorem \ref{thm-nlw}, for any $s>1$ and  $\frak u_0=[u_1,u_2]\in H^s\times H^{s-1}$, the action-vector of a
 solution $\frak{u}^\gamma(t;\frak u_0)$ for equation \eqref{nlw1}, equal  $\frak u_0$ at $t=0$, satisfies 
\[
  \lim_{\gamma\to0}\,
  \sup_{\tau\geqslant0}\|\cD\big(I( \frak u^{\gamma}(\tau\gamma^{-1};\frak u_0)) \big) -\cD\big((I( a^{0}(\tau;v_0))\big) \|_{L,h_I^1}^* =0,
  \]
  where $v_0=\Psi(u_1+iL^{-1}u_2)$.
\end{corollary}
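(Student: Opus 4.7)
The plan is to mirror the proof of Corollary~\ref{coroll-cgl}: first identify the actions of $\frak u^\gamma$ with those of the $a$-variable of Theorem~\ref{thm-nlw}, and then transfer the uniform-in-time dual-Lipschitz convergence from $h^1$ to $h^1_I$ by a truncation argument. For the identification, set $v = L^{-1}u_t$ and $\xi = u + iv$, so the $k$-th Fourier coefficient of $\xi$ equals $v_k = \langle u, \frak e_k\rangle + i\lambda_k^{-1/2}\langle u_t, \frak e_k\rangle$, which yields
\[
|v_k|^2 = |\langle u, \frak e_k\rangle|^2 + \lambda_k^{-1}|\langle u_t, \frak e_k\rangle|^2 = 2\,I_k(\frak u).
\]
Since the interaction representation $a_k = e^{-i\gamma^{-1}\lambda_k^{1/2}\tau}v_k$ preserves moduli and the slow time is $\tau = \gamma t$, we get the pointwise identity $I(\frak u^\gamma(\tau\gamma^{-1};\frak u_0)) = I(a^\gamma(\tau;v_0))$ with $v_0 = \Psi(u_1 + iL^{-1}u_2)\in h^s$. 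The corollary is therefore equivalent to
\[
\lim_{\gamma\to 0}\sup_{\tau\ge 0}\bigl\|\cD(I(a^\gamma(\tau;v_0))) - \cD(I(a^0(\tau;v_0)))\bigr\|^*_{L,h^1_I} = 0.
\]

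For the transfer, note that by Cauchy--Schwarz the action map $I:h^1\to h^1_I$ satisfies
\[
|I(a) - I(a')|_{I,1} \le C(|a|_1 + |a'|_1)\,|a - a'|_1,
\]
so it is continuous with quadratic growth but not globally Lipschitz. Given a test function $f$ on $h^1_I$ with $|f|_{L,h^1_I}\le 1$ and a radius $R>0$, fix a Lipschitz cutoff $\chi_R:h^1\to[0,1]$ that equals $1$ on $\bar B_R(h^1)$, vanishes outside $\bar B_{2R}(h^1)$, and satisfies $\mathrm{Lip}(\chi_R)\le 2/R$. A direct case analysis shows that $\tilde f_R := \chi_R\cdot(f\circ I)$ is globally bounded by $1$ and Lipschitz on $h^1$ with $|\tilde f_R|_{L,h^1}\le C(1+R)$. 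Hence Theorem~\ref{thm-nlw} gives
\[
\sup_{\tau\ge 0}\bigl|\EE\tilde f_R(a^\gamma(\tau;v_0)) - \EE\tilde f_R(a^0(\tau;v_0))\bigr|\le C(1+R)\sup_{\tau\ge 0}\|\cD(a^\gamma)-\cD(a^0)\|^*_{L,h^1}\xrightarrow[\gamma\to 0]{}0.
\]

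Finally, the truncation error is handled by the uniform moment bounds. From \eqref{nlw-ap1}--\eqref{nlw-ef-ap}, together with the fact that $\Psi$ and the interaction representation are isometries of $h^s$, for each $m\in\Nn$ we obtain $\sup_{\gamma,\tau}\EE|a^\gamma(\tau;v_0)|_1^{2m} + \sup_\tau\EE|a^0(\tau;v_0)|_1^{2m}\le C(m)$. Chebyshev then yields $\PP(|a^\gamma(\tau)|_1>R) + \PP(|a^0(\tau)|_1>R)\le C(m)R^{-2m}$, so the truncation error $|\EE(f\circ I - \tilde f_R)(a^{\gamma})| + |\EE(f\circ I - \tilde f_R)(a^{0})|$ is at most $C(m)R^{-2m}$, uniformly in $\tau$ and $\gamma$. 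Given $\varepsilon>0$, one first picks $R$ large so this error is $\le \varepsilon/2$, then picks $\gamma$ small so the $R$-truncated term is $\le \varepsilon/2$; this completes the convergence. Uniformity over $v_0\in\bar B_M(h^s)$ is inherited from the corresponding uniformity in Theorem~\ref{thm-nlw} and from continuity of the moment bound $C(m,|v_0|_1,\cB)$. The main obstacle is exactly this transfer from $h^1$- to $h^1_I$-dual-Lipschitz convergence: the quadratic growth of $I$ means neither Theorem~\ref{thm-nlw} alone nor the moment bounds alone suffice, and both must be used together through the cutoff $\chi_R$.
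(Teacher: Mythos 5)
Your proposal is correct and follows exactly the route the paper intends: as with Corollary~\ref{coroll-cgl}, the corollary rests on the observation that $\Psi$ and the interaction representation preserve the actions, so that $I(\frak u^\gamma(\tau\gamma^{-1};\frak u_0))=I(a^\gamma(\tau;v_0))$, after which the uniform dual-Lipschitz convergence of Theorem~\ref{thm-nlw} is pushed forward through the quadratic map $I$. The cutoff-plus-moment-bound argument you give to handle the fact that $I:h^1\to h^1_I$ is only locally Lipschitz is precisely the step the paper leaves implicit, and you carry it out correctly.
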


The result of Theorem \ref{thm-nlw} is conditional since it  requires that eq.~\eqref{nlw-a1} is mixing. It is not our goal in this short section to check the 
mixing property. But we mention that since the effective equation \eqref{nlw-ef} is similar to the NLW equation \eqref{nlw1}, written in the form
\eqref{nlw2}, and since by \eqref{nlw-conv} solutions for  \eqref{nlw-ef} inherit the estimates on solutions for \eqref{nlw1}, obtained in 
\cite{m2014, mn2018}, then most likely the proof of the exponential mixing in eq.~\eqref{nlw1}, given in \cite{m2014} (and based on an abstract theorem from \cite{KS}), 
 applies to establish the 
exponential mixing for eq.~\eqref{nlw-ef}  and thus verify for it the analogy of Assumption~\ref{assume2}, required in Theorem~\ref{thm-nlw}.  

\section*{Acknowledgment}
We are thankful to Armen Shirikyan for discussion. 
GH was supported by National Natural Science Foundation of China (Significant project No.11790273).
\bibliography{reference}{}

\begin{thebibliography}{10}

\bibitem{BK}
A.~Boritchev and S.~B. Kuksin.
\newblock {\em One-dimensional turbulence and the stochastic {B}urgers
  equation}.
\newblock Mathematical Surveys and Monographs. AMS Publications, Providence,
  2021.

\bibitem{AD}
A.~Dymov.
\newblock Nonequilibrium statistical mechanics of weakly stochastically
  perturbed system of oscillators.
\newblock {\em Ann. Henri Poincare}, 17:1825--1882, 2016.

\bibitem{DK}
A.~Dymov and S.~B. Kuksin.
\newblock Formal expansions in stochastic model for wave turbulence 1: kinetic
  limit.
\newblock {\em Comm. Math. Phys.}, 382:951--1014, 2021.

\bibitem{HKM}
G.~Huang, S.~B. Kuksin, and A.~Maiocchi.
\newblock {Time-averaging for weakly nonlinear CGL equations with arbitrary
  potential}.
\newblock {\em Fields Inst. Comm.}, 75:323--349, 2015.

\bibitem{KS}
S.~Kuksin and A.~Shirikyan.
\newblock {\em Mathematics of {T}wo-{D}imensional {T}urbulence}.
\newblock Cambridge University Press, Cambridge, 2012.

\bibitem{KM}
S.~B. Kuksin and A.~Maiocchi.
\newblock { Resonant averaging for small-amplitude solutions of stochastic
  nonlinear Schr\"odinger equations}.
\newblock {\em Proceedings A of the Royal Society of Edinburgh}, 147:357--394,
  2017.

\bibitem{m2014}
D.~Martirosyan.
\newblock Exponential mixing for the white-forced damped nonlinear wave
  equation.
\newblock {\em Evol. Equ. Control Theory}, 3(4):645--670, 2014.

\bibitem{mn2018}
D.~Martirosyan and V.~Nersesyan.
\newblock Local large deviations principle for occupation measures of the
  stochastic damped nonlinear wave equation.
\newblock {\em Ann. IHP Prob. Stat.}, 54(4):2002--2041, 2018.

\bibitem{Naz}
S.~Nazarenko.
\newblock {\em Wave {T}urbulence}.
\newblock Springer, 2011.

\bibitem{Oda}
C.~Odasso.
\newblock {Exponential mixing for stochastic PDEs: the non-additive case}.
\newblock {\em Prob. Theory Relat. Fields}, 140:41--82, 2008.

\bibitem{Vil}
C.~Villani.
\newblock {\em {Optimal Transport}}.
\newblock Springer-Verlag, Berlin, 2009.

\end{thebibliography}
\bibliographystyle{plain}

\end{document}